\numberwithin{equation}{section}
\theoremstyle{definition}
\newtheorem{thm}{\sc Theorem}[section]
\newtheorem{lem}[thm]{\sc Lemma}
\newtheorem{rem}[thm]{\sc Remark}
\newtheorem{rems}[thm]{\sc Remarks}
\newtheorem{defi}[thm]{\sc Definition}
\newtheorem{exs}[thm]{\sc Examples}
\title[Wavelet transform]{Continuous Abstract Wavelet transform on Homogeneous Spaces}
\begin{document}
\begin{abstract}
The support of  wavelet transform associated with square integrable irreducible representation of a homogeneous space is shown to have infinite measure.  Pointwise homogeneous approximation property for wavelet transform has been investigated. An analogue of  Heisenberg type inequality has been also obtained for wavelet transform. 
\end{abstract}
\author[J. Sharma]{JYOTI SHARMA}
\address{Department of Mathematics, University of Delhi, Delhi, 110007, India.}
\email{jsharma3698@gmail.com}

\author[A. Kumar]{AJAY KUMAR$^\ast$}
\address{Department of Mathematics, University of Delhi, Delhi, 110007, India.}
\email[Corresponding author]{akumar@maths.du.ac.in}

\thanks{$^\ast$Corresponding author}
\keywords{Abstract wavelet transform, square integrable representation, Heisenberg type inequality}
\subjclass[2010]{Primary 43A85; Secondary 65T60; 42C40}

\maketitle

\section{Introduction}
\noindent Fourier transform is used to analyse the frequency property of a given signal. However, due to loss of information about time, other transforms like Gabor transform and wavelet transform have been found to be more useful. Continuous wavelet transform has been widely used in signal and image processing for investigating time-varying frequency.  Unlike Fourier analysis, wavelet analysis expands functions not in terms of trigonometric polynomials but in terms of wavelets, which are generated in the form of translations and dilations of a fixed function called the admissible wavelet. Wavelets obtained in this way have special scaling properties. They are localized in time and frequency, permitting a closer connection between the function being represented and their coefficients. These have been recently studied in harmonic analyis (see \cite{ali;00,Dav:96,fuhr;05,kam:12,Liu:09}).\\
 We  begin by defining wavelet transform for an arbitrary homogeneous space.
Let G be a locally compact group with left Haar measure $\mu$ and H a closed subgroup of $G$. The left coset space $G/H$ is a homogeneous space with quotient topology. Also, $G$ acts on $G/H$ via $(g,aH) \to  gaH$. We assume that $\Delta_G(h)= \Delta_H(h),$ for all $h\in H,$ where $\Delta_G$ and $\Delta_H$ are the modular function of $G$ and $H$ respectively.
For the pair $(G,H),$ a rho function is  a continuous function $\rho: G \to (0,\infty)$ such that $\rho(gh)= \rho(g)$ for all $g\in G$ and $h\in H$(see \cite[p. 65]{fol}).
 Let $\mathcal{U}(\mathcal{H}_{\pi})$ be the group consisting of all unitary operators on some Hilbert space $\mathcal{H}_{\pi}$. A continuous unitary representation of the homogeneous space $G/H$  is a map $\pi$ from $G/H$ into $\mathcal{U}(\mathcal{H}_{\pi})$ for which $gH \to \langle \pi(gH)\xi, \eta \rangle  $ is a continuous map from $G/H \to \mathbb{C}$ such that for each $\xi,\eta \in \mathcal{H}_{\pi}$ and  $g,k \in G$,
 \begin{align*}
 \pi(gkH) = \pi(gH)\pi(kH)\quad \text{ and } \quad  \pi(g^{-1}H) = \pi(gH)^*.
 \end{align*} 
 Let $\pi$ be a square integrable representation of $G/H$, i.e.  there exists some $\xi \in \mathcal{H}_{\pi}$ and $0< C_{\xi} < \infty$, satisfying 
 \begin{align}\label{eq1}
C_{\xi} =  \int_{G/H} \frac{\rho(e)}{\rho(g)}|\langle \xi, \pi(gH)\xi \rangle|^{2} d\mu_{G/H}(gH).
\end{align}
  Then, $\xi$ satisfying (\ref{eq1}) is called an admissible wavelet.
A continuous wavelet transform associated to $\xi$ is a linear operator $W_{\xi} : \mathcal{H}_{\pi} \to C(G/H) $ defined by 
 \begin{align*}
 (W_{\xi}\eta)(gH) = \left( \frac{\rho(e)}{\rho(g)}\right)^{1/2}\langle \eta, \pi(gH)\xi \rangle \text{ for all } \eta \in \mathcal{H}_{\pi}, gH \in G/H.
 \end{align*}  
\noindent If $\pi$ is irreducible, then $W_{\xi}$ is a bounded linear operator from Hilbert space $\mathcal{H}_{\pi} \text{ into } L^2(G/H)$ and $ L^2(G/H) \simeq L^2(G/H, d\mu_{G/H}),$ where $\mu_{G/H}$ is  a relatively $G$-invariant measure on $G/H$ which arises from the rho function $\rho.$ 
 Also, $W_{\xi}(\mathcal{H}_{\pi})$ is a reproducing kernel Hilbert space with pointwise bounded kernel and the operator ${C_{\xi}}^{-1/2}W_{\xi}$  is an isometry. For detailed study of wavelet transform on homogeneous space, one can refer to \cite{kam:12}.\\
In this paper, we first show that for  $\eta \in \mathcal{H}_{\pi}\setminus \{0\},$ the support of $W_{\xi}\eta$ is a set of infinite measure. We prove that every pair of admissible vectors possesses homogeneous approximation property. In addition, we study the wavelet groups of the form $B\ltimes A,$ where $B,A$ are locally  compact, type I groups. Moreover, lower estimate on norm of wavelet transform and Heisenberg type inequality have been obtained.
\section{Concentration of Wavelet transform}
\noindent Throughout this section, we assume that $H$ is compact and $\rho(g) = 1$ for all $g \in G$. Thus, $\mu_{G/H} $ is a $G$-invariant measure, i.e. $d\mu_{G/H}(saH) = d\mu_{G/H}(aH)$ for all $s\in G$ and $aH\in G/H$. To investigate the support of the wavelet transform $W_{\xi}\eta$, we begin with following lemmas.
\begin{lem}\label{cont}
If $ M_{0}\subseteq M \subseteq G/H$ are such that $ \mu_{G/H}(M) < \infty,$ then the function $h: G \to [0,\infty) $ such that $h(a) = \mu_{G/H}(M\cap aM_{0})$, is continuous.
\end{lem}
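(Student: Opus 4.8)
The plan is to realize $h$ as an inner product on $L^2(G/H)$ and then invoke the strong continuity of translation. Since $M_0\subseteq M$ and $\mu_{G/H}(M)<\infty$, both characteristic functions $\chi_M$ and $\chi_{M_0}$ belong to $L^2(G/H,\mu_{G/H})$. Write $L_a$ for the translation operator $(L_af)(xH)=f(a^{-1}xH)$; since the $G$-action carries $M_0$ to $aM_0$ one has $\chi_{aM_0}=L_a\chi_{M_0}$, and therefore
\begin{align*}
h(a)=\mu_{G/H}(M\cap aM_0)=\int_{G/H}\chi_M(xH)\,\chi_{M_0}(a^{-1}xH)\,d\mu_{G/H}(xH)=\langle\chi_M,L_a\chi_{M_0}\rangle_{L^2(G/H)}.
\end{align*}
In particular $0\le h(a)\le\mu_{G/H}(M)<\infty$, so $h$ is a well-defined, finite function on $G$.

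The heart of the matter is the claim that $a\mapsto L_af$ is norm-continuous from $G$ into $L^2(G/H)$ for every $f\in L^2(G/H)$. Because $\mu_{G/H}$ is $G$-invariant in this section, each $L_a$ is unitary on $L^2(G/H)$, and this is exactly what makes the usual density argument work. One first verifies the claim for $f\in C_c(G/H)$: if $K=\operatorname{supp}f$ and $V$ is a fixed compact symmetric neighbourhood of $e$ in $G$, then for $a\in V$ every $L_af$ is supported in the compact set $VK\subseteq G/H$, while uniform continuity of $f$ gives $\|L_af-f\|_\infty\to0$ as $a\to e$; hence $\|L_af-f\|_2\le\mu_{G/H}(VK)^{1/2}\|L_af-f\|_\infty\to0$. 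For general $f\in L^2(G/H)$ one picks $g\in C_c(G/H)$ with $\|f-g\|_2$ small (possible since $\mu_{G/H}$ is a Radon measure, so $C_c(G/H)$ is dense in $L^2$), and uses the isometry property of $L_a$:
\begin{align*}
\|L_af-f\|_2\le\|L_a(f-g)\|_2+\|L_ag-g\|_2+\|g-f\|_2=2\|f-g\|_2+\|L_ag-g\|_2.
\end{align*}
This yields continuity at $e$, and continuity at an arbitrary $b\in G$ follows from $\|L_af-L_bf\|_2=\|L_{b^{-1}a}f-f\|_2$.

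Granting this, the proof concludes at once: taking $f=\chi_{M_0}$, the Cauchy--Schwarz inequality gives
\begin{align*}
|h(a)-h(b)|=\bigl|\langle\chi_M,\,L_a\chi_{M_0}-L_b\chi_{M_0}\rangle\bigr|\le\|\chi_M\|_2\,\|L_a\chi_{M_0}-L_b\chi_{M_0}\|_2,
\end{align*}
and the right-hand side tends to $0$ as $a\to b$, so $h$ is continuous on $G$. I expect the only genuine obstacle to be the $L^2$-continuity of translation, and specifically getting the density/approximation step to go through cleanly; this is where the $G$-invariance of $\mu_{G/H}$ (making each $L_a$ an isometry) and the compactness of $H$ (ensuring $G/H$ is locally compact and $\mu_{G/H}$ a Radon measure for which $C_c(G/H)$ is dense in $L^2$) are used. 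An alternative, more hands-on route — approximating $M$ and $M_0$ from outside by open sets and from inside by compact sets using regularity of $\mu_{G/H}$, and controlling $\mu_{G/H}(M\cap aM_0)$ directly — also works, but the inner-product formulation above is shorter.
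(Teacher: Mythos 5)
Your proof is correct and is essentially the paper's argument: both approximate $\chi_{M_0}$ by a function in $C_{00}(G/H)$ and reduce the continuity of $h$ to the continuity of translation on such functions, using the $G$-invariance of $\mu_{G/H}$ and the finiteness of $\mu_{G/H}(M)$ to control the approximation error. The only cosmetic difference is that you package this as strong continuity of the unitary translation representation on $L^2(G/H)$ followed by Cauchy--Schwarz, whereas the paper runs the same density/triangle-inequality estimate directly in $L^1$ against the sup-norm continuity of $a\mapsto {}_af$.
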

\begin{proof}
Let  $C_{00}(G/H)$ be the space of continuous functions with compact support. Since $\chi_{M_0} \in L^1(G/H)$ and $C_{00}(G/H)$  is dense in $L^{1}(G/H),$ therefore we  choose $f \in C_{00}(G/H)$ such that $\| \chi_{M_0} - f \|_{1} < \epsilon$, where $ \epsilon > 0$ is arbitrary. Then, we consider
\begin{flalign*}
|h(a)-h(b)| &\leq \int_{G/H} \left|\left(\chi_{aM_0} - {}_af\right)(xH)\right|d\mu_{G/H}(xH)&\\
&\qquad + \int_{M} \left|\left({}_af-{}_bf\right)(xH)\right|d\mu_{G/H}(xH)&\\
&\qquad +  \int_{G/H} \left|\left({}_bf -\chi_{bM_0}\right)(xH)\right|d\mu_{G/H}(xH)&\\
&= 2\|\chi_{M_0} - f \|_1 + \int_{M} \left|\left({}_af-  {}_bf\right)(xH)\right|d\mu_{G/H}(xH) &\\
& < 2\epsilon + \int_{M} \left|\left({}_af- {}_bf\right)(xH)\right|d\mu_{G/H}(xH).
\end{flalign*}

\noindent   The map $a \to  {}_af$ is  continuous  from $G \to (C_{00}(G/H), \|\cdot\|_{\infty})$ \cite[Proposition 1.25]{kan:13}, where ${}_af(xH) = f(a^{-1}xH)$ for all $a\in G$ and $xH \in G/H$. Hence, $h$ is a continuous function.
\end{proof}
\vspace{10pt}
\noindent In the next lemma, we generalize \cite[Lemma 2.1]{Hog} to homogeneous spaces.
\begin{lem}\label{conn}
Let $G_0$, the identity component of $G$ be non-compact and  $M_{0}\subseteq M \subset G/H$ be such that $ 0 <\mu_{G/H}(M_0)\leq \mu_{G/H}(M) < \infty.$ Then for $\epsilon > 0$ there exists $a \in G_0$ satisfying
\begin{align*}
\mu_{G/H}(M) < \mu_{G/H}(M\cup aM_{0}) < \mu_{G/H}(M)+\epsilon.
\end{align*}
\end{lem}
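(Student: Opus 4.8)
The plan is to work with the function $h(a)=\mu_{G/H}(M\cap aM_0)$, $a\in G$, which by Lemma~\ref{cont} (applied with the given inclusion $M_0\subseteq M$) is continuous on $G$. Since $\mu_{G/H}$ is $G$-invariant, $\mu_{G/H}(aM_0)=\mu_{G/H}(M_0)$, so $0\le h(a)\le\mu_{G/H}(M_0)$ for every $a$, with $h(e)=\mu_{G/H}(M\cap M_0)=\mu_{G/H}(M_0)$ because $M_0\subseteq M$. As $\mu_{G/H}(M\cup aM_0)=\mu_{G/H}(M)+\mu_{G/H}(M_0)-h(a)$, the assertion is equivalent to producing $a\in G_0$ with $\mu_{G/H}(M_0)-\epsilon<h(a)<\mu_{G/H}(M_0)$.

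The key point is that $h\in C_0(G)$. To see this I would pass to $L^2(G)$ through Weil's integration formula, which is available here because $H$ is compact: after normalizing the Haar measure of $H$, one has $\mu_{G/H}(N)=\mu_G(q^{-1}(N))$ for Borel $N\subseteq G/H$, where $q\colon G\to G/H$ is the quotient map. In particular $\chi_{q^{-1}(M)}$ and $\chi_{q^{-1}(M_0)}$ are bounded functions on $G$ with finite-measure support, hence lie in $L^2(G)$, and a direct computation rewrites $h(a)$ as the inner product $\langle \chi_{q^{-1}(M)},\,L_a\chi_{q^{-1}(M_0)}\rangle_{L^2(G)}$, where $(L_af)(x)=f(a^{-1}x)$, i.e. as a matrix coefficient of the left regular representation of $G$. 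Matrix coefficients of the regular representation belong to $C_0(G)$: this is immediate when $\chi_{q^{-1}(M)},\chi_{q^{-1}(M_0)}$ are replaced by functions in $C_{00}(G)$ (the resulting function of $a$ then has compact support, being supported in a product of two compact sets), and the general case follows by density in $L^2(G)$ together with $\|L_a\|=1$, which makes the approximation uniform in $a$.

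Granting $h\in C_0(G)$, its restriction to the closed subgroup $G_0$ lies in $C_0(G_0)$; since $G_0$ is non-compact, the set $\{a\in G_0:h(a)\ge\delta\}$ is compact for every $\delta>0$, so for $\delta$ slightly below the positive number $\mu_{G/H}(M_0)$ there is $b\in G_0$ with $h(b)<\mu_{G/H}(M_0)$. Now $h(G_0)$ is a connected subset of $\mathbb{R}$ (the continuous image of the connected set $G_0$) containing both $h(e)=\mu_{G/H}(M_0)$ and $h(b)<\mu_{G/H}(M_0)$, hence it contains the whole interval $[h(b),\mu_{G/H}(M_0)]$, which meets $(\mu_{G/H}(M_0)-\epsilon,\mu_{G/H}(M_0))$. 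Picking $a\in G_0$ with $h(a)$ in that window and substituting into $\mu_{G/H}(M\cup aM_0)=\mu_{G/H}(M)+\mu_{G/H}(M_0)-h(a)$ yields $\mu_{G/H}(M)<\mu_{G/H}(M\cup aM_0)<\mu_{G/H}(M)+\epsilon$, as required.

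I expect the only real obstacle to be the identification $h\in C_0(G)$: one has to set up Weil's formula carefully for the compact subgroup $H$ (so that finiteness of $\mu_{G/H}(M)$ transfers to $G$) and invoke the fact that matrix coefficients of the regular representation vanish at infinity. The remaining ingredients — continuity of $h$ from Lemma~\ref{cont}, connectedness of $G_0$, and the elementary inclusion–exclusion identity for $\mu_{G/H}(M\cup aM_0)$ — are routine. This is the natural homogeneous-space analogue of the argument in \cite[Lemma~2.1]{Hog}, where $G/H=G$ and the corresponding statement is that the autocorrelation of $\chi_{M_0}$ belongs to $C_0(G)$.
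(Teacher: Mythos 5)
Your proof is correct, and while it shares the overall skeleton of the paper's argument (continuity of $a\mapsto\mu_{G/H}(M\cap aM_0)$, the inclusion--exclusion identity, connectedness of $G_0$ and the intermediate value theorem), the crucial step --- producing some $b\in G_0$ at which $\mu_{G/H}(M\cup bM_0)$ strictly exceeds $\mu_{G/H}(M)$ --- is handled by a genuinely different mechanism. The paper stays entirely measure-theoretic: by inner regularity it picks a compact $K\subseteq M$ with $\mu_{G/H}(M\setminus K)<\mu_{G/H}(M_0)/2$, writes $K=\{xH:x\in E\}$, and chooses $a\in G_0\setminus EHE^{-1}$ (possible since $G_0$ is non-compact and $EHE^{-1}$ is compact) so that $aK\cap K=\emptyset$; a short estimate then gives $h(a)>h(e)$ at that single point. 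You instead lift everything to $L^2(G)$ via Weil's formula (legitimate here because $H$ is compact, so $q^{-1}(M)$ has finite Haar measure), recognize $\mu_{G/H}(M\cap aM_0)$ as a matrix coefficient $\langle\chi_{q^{-1}(M)},L_a\chi_{q^{-1}(M_0)}\rangle$ of the left regular representation, and invoke the standard fact that such coefficients lie in $C_0(G)$, restricting to the closed non-compact subgroup $G_0$. Both routes use compactness of $H$ and non-compactness of $G_0$ in an essential way; yours proves the stronger statement that $\mu_{G/H}(M\cap aM_0)\to 0$ as $a\to\infty$ in $G_0$, so the image of $a\mapsto\mu_{G/H}(M\cup aM_0)$ actually fills the whole interval $[\mu_{G/H}(M),\mu_{G/H}(M)+\mu_{G/H}(M_0))$, whereas the paper's is more elementary and self-contained, needing only regularity of $\mu_{G/H}$ rather than the $C_0$ property of matrix coefficients. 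The minor supporting points in your write-up (closedness of $G_0$, the identity $q^{-1}(aM_0)=aq^{-1}(M_0)$, uniformity of the $L^2$-approximation in $a$) all check out.
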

\begin{proof}
Define $h: G_0 \to \mathbb{R}^+$ by $h(a) = \mu_{G/H}(M\cup aM_{0}).$ Since $\mu_{G/H}$ is $G$-invariant, therefore $h(a) = \mu_{G/H}(M)+\mu_{G/H}(M_0) - \mu_{G/H}(M\cap aM_0).$ Then by  Lemma \ref{cont}, $h$ is a continuous function. Also $\mu_{G/H}$ is regular, so there exists a compact subset $K$ of $M$ such that $\mu_{G/H}(M\setminus K) < \mu_{G/H}(M_0)/2.$ Let $K = \{xH: x\in E \}$, where $E$ is a compact subset of $G$. Thus, $S = EHE^{-1}$ is a compact subset of $G$. Choose $a \in G_0\setminus S.$ For such a choice of $a,$ we have $ aK \cap K = \phi$. Moreover,
\begin{align*}
aM_0\cap K
&= a(M_0 \cap (K \cup K^{'}))\cap K\\
&= (aM_0 \cap aK \cap K)\cup (a(M_0 \cap K^{'})\cap K)\\
&= a(M_0 \cap K^{'})\cap K \subseteq a(M_0 \cap K^{'}). 
\end{align*} 
Therefore, it follows that
$$\mu_{G/H}(aM_0 \cap K)\leq \mu_{G/H}(a(M_0\cap K^{'}))=  \mu_{G/H}(M_0\setminus K) < \mu_{G/H}(M_0)/2.$$ 
Again, we have
\begin{flalign*}
 h(a)&= \mu_{G/H}(M\cup aM_0) &\\
&= \mu_{G/H}((M\cap K)\cup (M\cap K^{'})\cup (aM_0\cap K)\cup ( aM_{0}\cap K^{'}))&\\
&\geq \mu_{G/H}( (M\cap K)\cup ( aM_{0}\cap K^{'})) &\\
&= \mu_{G/H}(M)- \mu_{G/H}(M\cap K^{'})+ \mu_{G/H}(aM_0)- \mu_{G/H}(aM_0\cap K)&\\
&> \mu_{G/H}(M) = h(e). 
\end{flalign*} 
Thus, $h$ is a non-constant continuous function on the connected set $G_{0}$. So, we can choose $a \in G_{0}$ such that 
\begin{align*}
\mu_{G/H}(M) = h(e) < h(a) = \mu_{G/H}(M\cup aM_{0}) < \mu_{G/H}(M)+\epsilon = h(e)+ \epsilon.\hspace{1cm}\qedhere
\end{align*}
\end{proof}
\noindent In the next result, we prove that the support of wavelet transform $ W_{\xi}\eta $   has infinite measure. 

\begin{thm}\label{concth}
Let $G$ and $H$ be as in Lemma \ref{conn} and $\xi $ be an admissible wavelet associated with an irreducible square integrable representation $\pi$.  Then for any $\eta \in \mathcal{H}_{\pi} \setminus \{0\}$, the set $\{gH\in G/H: W_{\xi}\eta(gH)\neq 0 \}$ has infinite measure.  
\end{thm}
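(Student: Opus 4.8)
The plan is to argue by contradiction. Suppose $M:=\{gH\in G/H:W_\xi\eta(gH)\neq 0\}$ has finite measure. Since $C_\xi^{-1/2}W_\xi$ is an isometry and $\eta\neq 0$, the function $W_\xi\eta$ is a nonzero continuous function, so $M$ is a nonempty open set and $0<\mu_{G/H}(M)<\infty$. For a measurable set $E\subseteq G/H$ with $\mu_{G/H}(E)<\infty$, put $\mathcal V_E:=\{G\in W_\xi(\mathcal H_\pi):G=0\text{ a.e.\ on }(G/H)\setminus E\}$. The first step is the dimension estimate $\dim\mathcal V_E\le c_0\,\mu_{G/H}(E)$, where $c_0:=C_\xi^{-1}\|\xi\|^2$. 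Indeed, since $C_\xi^{-1/2}W_\xi$ is an isometry onto $W_\xi(\mathcal H_\pi)\subseteq L^2(G/H)$, the reproducing kernel of $W_\xi(\mathcal H_\pi)$ (with respect to the ambient $L^2$ inner product) is $k_{gH}=C_\xi^{-1}W_\xi(\pi(gH)\xi)$, whence $\|k_{gH}\|_2^2=k_{gH}(gH)=C_\xi^{-1}\|\xi\|^2=c_0$ for every $gH$; if $G_1,\dots,G_n$ are orthonormal in $\mathcal V_E$ then Bessel's inequality gives $\sum_{i=1}^n|G_i(gH)|^2=\sum_{i=1}^n|\langle G_i,k_{gH}\rangle|^2\le c_0$ for all $gH$, and integrating over $E$ (off which each $G_i$ vanishes) yields $n=\sum_i\|G_i\|_2^2\le c_0\,\mu_{G/H}(E)$. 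In particular $\mathcal V_M$ is finite dimensional and nonzero, since it contains $W_\xi\eta$.

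Next I would construct, using Lemma~\ref{conn} repeatedly, a nested family of finite-measure sets whose measures grow slowly but whose associated spaces $\mathcal V_{\bullet}$ gain a dimension at each step. Put $\epsilon:=1/(2c_0)$ and $M_1:=M$. Given $M_j$ with $M\subseteq M_j$ and $\mu_{G/H}(M_j)<\infty$, apply Lemma~\ref{conn} to the pair $(M_j,M)$ (that is, with $M_j$ as the ambient set and $M$ as the distinguished subset, which is legitimate since $0<\mu_{G/H}(M)\le\mu_{G/H}(M_j)<\infty$) to obtain $a_j\in G_0$ with $\mu_{G/H}(M_j)<\mu_{G/H}(M_j\cup a_jM)<\mu_{G/H}(M_j)+\epsilon$, and set $M_{j+1}:=M_j\cup a_jM$; then $M\subseteq M_{j+1}$ and $\mu_{G/H}(M_j)<\mu_{G/H}(M)+(j-1)\epsilon$. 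The covariance of the wavelet transform, $W_\xi(\pi(a_jH)\eta)(gH)=\langle\eta,\pi(a_j^{-1}gH)\xi\rangle=W_\xi\eta(a_j^{-1}gH)={}_{a_j}(W_\xi\eta)(gH)$ (using $\pi(a_j^{-1}H)\pi(gH)=\pi(a_j^{-1}gH)$), shows that $W_\xi(\pi(a_jH)\eta)\in W_\xi(\mathcal H_\pi)$ is nonzero precisely on $a_jM\subseteq M_{j+1}$, so it belongs to $\mathcal V_{M_{j+1}}$; but $\mu_{G/H}(a_jM\setminus M_j)=\mu_{G/H}(M_{j+1})-\mu_{G/H}(M_j)>0$, so it is not a.e.\ zero off $M_j$ and therefore does not belong to $\mathcal V_{M_j}$. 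Hence $\mathcal V_{M_j}\subsetneq\mathcal V_{M_{j+1}}$, and iterating, $\dim\mathcal V_{M_j}\ge\dim\mathcal V_{M_1}+(j-1)\ge j$.

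Combining the two estimates, for every $j\ge 1$ we get $j\le\dim\mathcal V_{M_j}\le c_0\,\mu_{G/H}(M_j)<c_0\,\mu_{G/H}(M)+c_0(j-1)\epsilon=c_0\,\mu_{G/H}(M)+(j-1)/2$, so $j<2c_0\,\mu_{G/H}(M)-1$ for all $j$, which is absurd. Therefore $\mu_{G/H}(M)=\infty$. I expect the only genuinely delicate point to be the dimension estimate $\dim\mathcal V_E\le c_0\,\mu_{G/H}(E)$ — specifically, pinning down that the reproducing kernel, computed with respect to the ambient $L^2$ inner product, has a uniformly bounded diagonal, and that this together with Bessel's inequality controls the dimension; once that is in place, the remainder is bookkeeping with Lemma~\ref{conn} and the covariance relation $W_\xi\circ\pi(aH)={}_a\circ W_\xi$.
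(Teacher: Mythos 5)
Your argument is correct and follows essentially the same route as the paper: iterate Lemma \ref{conn} to produce translates of $W_{\xi}\eta$ (via the covariance $W_{\xi}(\pi(aH)\eta)={}_a(W_{\xi}\eta)$) whose supports force the subspaces of image functions concentrated on finite-measure sets to have unboundedly large dimension. The only substantive difference is that you prove the required dimension bound $\dim\mathcal{V}_E\le C_{\xi}^{-1}\|\xi\|^2\,\mu_{G/H}(E)$ explicitly via Bessel's inequality against the kernel $k_{gH}=C_{\xi}^{-1}W_{\xi}(\pi(gH)\xi)$, whereas the paper cites the finite-dimensionality result of \cite{Wilc:00}; your quantitative version is self-contained and equally valid.
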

\begin{proof}
It is enough to show that for every subset $M$ of $ G/H$ with $\mu_{G/H}(M)< \infty $, the set $
\{ F \in W_{\xi}(\mathcal{H}_{\pi}): F = \chi_{M} F \} = \{0\}$.
For this assume that $M \subset G/H$ and $0\neq F_{0} \in W_{\xi}(\mathcal{H}_{\pi})$ with $F_0 = \chi_{M}F_0.$ Let $M_0 = \{xH \in G/H : F_{0}(xH)\neq 0 \}$. Then, 
 $$0 < \mu_{G/H}(M_0) \leq \mu_{G/H}(M) < \infty.$$ By Lemma \ref{conn}, there exists $a^{(1)} \in G_0 $ satisfying 
\[
\mu_{G/H}(M) < \mu_{G/H}(M \cup a^{(1)}M_0) < \mu_{G/H}(M) + \frac{\epsilon}{2}. 
\]
Take $M_1=M$ and $M_2 = M_1 \cup a^{(1)}M_0$ and again using  Lemma \ref{conn}, there exists $a^{(2)} \in G_0 $ satisfying 
\begin{align*}
\mu_{G/H}(M_2) < \mu_{G/H}(M_2 \cup a^{(2)}a^{(1)}M_0) < \mu_{G/H}(M_2) + \frac{\epsilon}{2^2}.
\end{align*}
 Continuing in the same manner, we get an increasing sequence $\{M_k\}_{k=1}^{\infty}$ of subsets of $G/H$ given by $$M_k = M_{k-1}\cup a^{(k-1)}\cdots a^{2}a^{1}M_0,$$  where  $a^{(i)} \in G_0$ for all $i = 1,2,\dots,k-1$ and satisfy
\begin{align}\label{lin-ind}
\mu_{G/H}(M_k)< \mu_{G/H}(M_k\cup a^{(k)}a^{(k-1)}\cdots a^{(1)}M_0)< \mu_{G/H}(M_k)+\frac{\epsilon}{2^k}.
\end{align}
Define $S = \bigcup\limits_{k=1}^{\infty} M_k.$ Then,
\begin{align*}
\mu_{G/H}(S) &= \lim\limits_{k \to \infty}\mu_{G/H}(M_k)
\leq \lim\limits_{k \to \infty } \left(\mu_{G/H}(M_{k-1})+ \frac{\epsilon}{2^{(k-1)}}\right)\\
&\leq  \mu_{G/H}(M)+ \lim\limits_{k \to \infty } \sum\limits_{i=1}^{k-1}\frac{\epsilon}{2^i}
=\mu_{G/H}(M)+ \epsilon < \infty.
\end{align*}
Consider the family $\{F_{k}\}_{k\in \mathbb{N}}$ of functions on $G/H$ as follows
\begin{flalign*}
&&&F_{1}(aH)= F_0(aH) &\\
&&&F_{k}(aH) = F_{k-1}((a^{k-1})^{-1}aH) \quad \text{for all } k \geq 2,\text{ and }   aH \in G/H.
\end{flalign*}
We will show by induction  that $F_{k}\in W_{\xi}(\mathcal{H}_{\pi})$ for all $k \in \mathbb{N}.$ Clearly, $F_1 = F_0 \in W_{\xi}(\mathcal{H}_{\pi}).$ Let us  assume that $F_{k-1} \in W_{\xi}(\mathcal{H}_{\pi}),\ \text{i.e.}\  F_{k-1} = W_{\xi}(\eta)$ for some $\eta \in \mathcal{H}_{\pi}.$ Then,
\begin{align*}
F_{k}(aH) &= W_{\xi}(\eta)((a^{(k-1)})^{-1}aH)\\
&=\left\langle \eta , \pi((a^{(k-1)})^{-1}aH)\xi \right\rangle\\
&= \left\langle \pi(a^{(k-1)}H)\eta, \pi(aH)\xi \right\rangle\\
&= W_{\xi}(\pi({a^{(k-1)}H)}\eta)
\end{align*}
which implies that $F_{k} \in W_{\xi}(\mathcal{H}_{\pi}).$ Using (\ref{lin-ind}), we conclude that the family $\{F_{k}\}_{k\in \mathbb{N}}$ of functions on $G/H$ is linearly independent vanishing outside the set $S$ of finite measure.
Since $W_{\xi}(\mathcal{H}_{\pi})$ is a reproducing kernel Hilbert space with point-wise bounded kernel, therefore each subspace of $W_{\xi}(\mathcal{H}_{\pi}) $ consisting of functions having support 
 on a set of finite measure must be of finite dimension \cite{Wilc:00}. If $W_{\xi}\eta \in W_{\xi}(\mathcal{H}_{\pi})$ such that $\mu_{G/H}\{xH : W_{\xi}\eta(xH)\neq 0 \}< \infty, $ then $W_{\xi}\eta = 0$ a.e. and hence $\eta = 0.$
Thus, the support of $W_{\xi}\eta$ is a set of infinite measure for every non-zero $\eta \in \mathcal{H}_{\pi}$.
 \end{proof}
\begin{rems} \noindent
\begin{enumerate}
 \item If we drop the irreducibility of $\pi$, then the conclusion of  Theorem \ref{concth} may not hold.
 Consider a locally compact  abelian topological group $G$ and the left regular representation $\pi$ of $G$. Then every non-zero function  $\psi \in C_{00}(G)$ is an admissible wavelet. Fix non-zero functions $f,\psi \in C_{00}(G).$
 Then, $W_{\psi}f(x)=\langle f, \pi(x)\psi \rangle = f* \psi^{\star}(x)$, where $*$ is the convolution and $\psi^{\star}(x)= \overline{\psi(-x)}$.  Hence $W_{\psi}f \in C_{00}(G).$

 \item If G is an abelian locally compact group and $\pi$ is an irreducible representation of $G$, then the support of wavelet transform is $G.$ In this case $W_{\xi}\eta(x) =  \overline{ \chi(x)}\langle \eta, \xi\rangle$, where $\chi$ is the character of $G$ associated with $\pi.$ 
This may not be true if we consider a non-abelian compact group G.
 Consider $G = \mathbb{T}\times D_3, \text{ where } D_{3} = \{x,y : x^3 = e = y^2, xy = yx^{-1} \}.$
 Let $\pi$ be an irreducible unitary representation of G given by:
 \begin{align*}
 \pi(t,x) = e^{int} \otimes \begin{bmatrix}
 \omega & 0\\
 0 & \omega^{-1}
 \end{bmatrix},\ 
 \pi(t,y) = e^{int} \otimes \begin{bmatrix}
 0 & 1\\
 1 & 0
 \end{bmatrix}, 
 \end{align*}
  where $\omega^{3} = 1$,  $ n\in \mathbb{Z}$ is fixed and $\otimes $ being the outer tensor product.
 If we consider $\xi =  \eta =  1\otimes e_1 \in \mathbb{C} \otimes \mathbb{C}^2$, then $W_{\xi}\eta(t,y)= 0.$
 Thus, $\{ (s,u)\in \mathbb{T}\times D_{3}: W_{\xi}\eta (s,u)\neq 0\} \subseteq \mathbb{T} \times D^{3}/{\{y\}}$ which implies that
 \begin{align*}
\mu_{G}\{ (s,u)\in \mathbb{T}\times D_{3}: W_{\xi}\eta(s,u)\neq 0\} \leq \mu_{\mathbb{T}}(\mathbb{T})\frac{\#(D^{3}/{\{y\}})}{\#(D_3)} < 1
\end{align*} where $\#(D_{3})$ is the cardinality of $D_3.$ 
 \end{enumerate}
 \end{rems}
\noindent We conclude this section section by giving  examples of group having  a square integrable irreducible representation.
\begin{exs}\label{eg}
1. Groups of the type $ H \ltimes \mathbb{R}^n ,$ where $H$ is a closed subgroup of $GL_n(R).$
\begin{enumerate}[(i)]

\item Let $G =  A  \ltimes \mathbb{R}^2$, where 
$A = \left \{ \begin{pmatrix}
a_1 & 0\\
0 & a_2 
\end{pmatrix} : a_1,a_2 \in \mathbb{R}^* \right\}$.\\
 A  square integrable irreducible representation $\pi$ of $G$ on $L^2(R^2)$ is given by
 $$\pi\left[ \begin{pmatrix}
a_1 & 0\\
0 & a_2
\end{pmatrix}, \begin{pmatrix}
 x_1\\
 x_2
\end{pmatrix}\right] g \begin{pmatrix}
y_1\\
y_2
\end{pmatrix} = \frac{1}{\sqrt{|a_1a_2|}}g\begin{pmatrix}
(y_1-x_1)/a_1\\ (y_2 -x_2)/a_2
\end{pmatrix}.$$
Also, $\psi \in L^2(\mathbb{R}^2)$ is admissible if and only if 
$ C_{\psi} = \int_{\mathbb{R}^2} \Big| \frac{\widehat{\psi}(\gamma_1,\gamma_2)}{\sqrt{|\gamma_1\gamma_2|}}\Big|^2 d\gamma_1 d\gamma_2$ $ < \infty$  \cite{Dav:96}.

\item $G =A \ltimes \mathbb{R}^2 ,$ where $A = \left\{\begin{pmatrix}
a & b\\
0 & 1
\end{pmatrix}:a\in \mathbb{R}^*,b\in \mathbb{R}\right\}$.\\
 A square integrable irreducible representation $\pi$ on $L^2(\mathbb{R}^2)$ is given by  
$$\pi\left[ \begin{pmatrix}
a & b\\
0 & 1
\end{pmatrix}, \begin{pmatrix}
 x\\
 y
\end{pmatrix} \right] g(u,v) = a^{-1}g\left(
a^{-1}(u-x+b(v-y)), v-y\right).$$
Also, $\psi \in L^2(\mathbb{R}^2)$ is admissible if and only if  $ C_{\psi}=\int_{\mathbb{R}^2}\frac{|\widehat{\psi}(\xi_1,\xi_2)|^2}{|\xi_1|^2}d\xi_1d\xi_2 < \infty.$
\item  Shear group $\mathcal{S} = (\mathbb{R}^{*} \ltimes \mathbb{R}^{n-1})\ltimes \mathbb{R}^{n} \text{ for } n\geq 2 $ with group operation
$(a,s,t).(a',s',t') = (aa',s+|a|^{1-1/n}s^{'}, t+S_s A_a t'),$ where
 $$
 A_a = \begin{pmatrix}
 a & 0_{n-1}^T \\
 0_{n-1} & sgn(a)|a|^{1/n}I_{n-1} 
\end{pmatrix}  \text{ and }
S_s = \begin{pmatrix}
1 & s^{T}\\
0_{n-1} & I_{n-1}
\end{pmatrix}.$$
A square integrable irreducible representation $\pi$ on $L^2{(\mathbb{R}^n)}$ is given by 
$$\pi(a,s,t)f(x) = |a|^{\frac{1}{2n}-1}f(A_a^{-1}S_s^{-1}(x-t))$$ and function $\psi \in L^2(\mathbb{R}^2) $ is admissible if and only if $C_{\psi} = \int_{\mathbb{R}^n} \frac{|\widehat{\psi}(\omega)|^2}{|\omega_1|^n} d\omega < \infty$ \cite{dah;10}.
\end{enumerate}
2. Low dimensional nilpotent Lie groups.
\begin{enumerate}[(i)]
\item Reduced Weyl Heisenberg group, $ \mathbb{R}^{n} \ltimes (\mathbb{R}^{n} \times \mathbb{T})\text{ for } n\in \mathbb{N}$ with the group operation $(a_1, b_1,t_1)(a_2,b_2,t_2) = (a_1+a_2, b_1+b_2, t_1 t_2e^{2\pi i a_1\cdot b_2}).$
An irreducible square integrable representation $\sigma$ on the Hilbert space $ L^2(\mathbb{R}^n)$ is given by 
$$\sigma(a,b,t)f(x)= te^{2\pi i b(x-a)}f(x-a).$$
Every $\psi \in L^2(R^n)$ is an admissible wavelet and $C_{\psi} = \|\psi\|_2^2$.
\end{enumerate}

\noindent For the group structure of 5-dimensional groups $G_{5,1}, G_{5,3},G_{5,6}$ given below (see \cite{Niel:83}).
\begin{enumerate}[(i)]
\item [(ii)] $G_{5,1}/\mathbb{Z},$ an irreducible square integrable  representation $ \sigma $ on the  Hilbert space $ L^2(\mathbb{R}^2)$ is given by 
$$\sigma(t,x_2\dots , x_5)f(s_1,s_2) = te^{-2\pi i(x_2s_1+x_4s_2)}f(s_1-x_3,s_2-x_5),$$
where $t\in \mathbb{T}, x_i \in \mathbb{R}.$ 
Every $\psi \in L^2(R^2)$ is an admissible wavelet and $C_{\psi} = \|\psi\|_2^2$.
\item [(iii)] $G_{5,3}/\mathbb{Z},$ a square integrable irreducible representation $ \sigma $ on the Hilbert space $ L^2(\mathbb{R}^2)$ is given by 
$$ \text{ } \qquad \sigma(t,x_2\dots , x_5)f(s_1,s_2) = te^{-2\pi i(x_3x_4 -x_5s_1+x_2s_2-\frac{1}{2}x_4s_2^2)}f(s_1-x_3,s_2-x_5),$$
where $t\in \mathbb{T}, x_i \in \mathbb{R}.$ 
Every $\psi \in L^2(R^2)$ is an admissible wavelet and $C_{\psi} = \|\psi\|_2^2$.
\item[(iv)]  $G_{5,6}/\mathbb{Z},$ an irreducible square integrable representation $ \sigma $ on the Hilbert space $ L^2(\mathbb{R}^2)$ is given by 
\begin{align*} 
\quad \sigma(t,x_2\dots , x_5)f(s_1,s_2) \\
 = t\exp2\pi i&\left( -\frac{1}{2}x_4^2x_5-\frac{1}{6}x_4x_5^3\right.-x_3s_1-x_4x_5+\frac{1}{6}x_5^3s_1\\
 &-\frac{1}{2}x_5s_1^2-x_2s_2+\frac{1}{2}x_4x_5^2s_2+\frac{1}{2}x_5^3s_2^2\\
 & -\frac{1}{2}x_4x_5\left.-\frac{1}{2}x_5^2s_1s_2+\frac{1}{2x_5s_1s_2^2} \right)f(s_1-x_4,s_2-x_5),
\end{align*}
where $t\in \mathbb{T}, x_i \in \mathbb{R}.$ 
Every $\psi \in L^2(R^2)$ is an admissible wavelet and $C_{\psi} = \|\psi\|_2^2$.
\end{enumerate}
3. Let $A$ be a locally compact group  with non-compact identity component having a square integrable irreducible representation $\sigma$ and $B$ be a compact group of the form $H\rtimes K$. Let $\tau$ be an irreducible representation of $B$ which is identity on $K$. Then, $ G = (A \times B)/({\{0\}\times K})$ has an irreducible square integrable representation $\pi = \sigma \otimes \tau$, where $\otimes $ being the outer tensor product \cite[Theorem 7.17]{fol}. 
\end{exs}

\section{Homogeneous Approximation Property}
\noindent In this section, we deal with homogeneous approximation property for the wavelet transform on homogeneous space $G/H,$ where $H$ is a closed subgroup of $G$. For an  irreducible unitary representation $\pi$ of $G/H$ with representation space $\mathcal{H}_{\pi}$, 
a pair $ (\xi_1, \xi_2) $ consisting of admissible wavelets in $\mathcal{H}_{\pi}$ is said to be an admissible pair if
$\langle \xi_1, \xi_2 \rangle \neq 0$ and $$C_{\xi_1
\xi_2} = \frac{1}{\langle \xi_1,\xi_2 \rangle}\int_{G/H}W_{\xi_1}(\xi_1)(gH)\ \overline{W_{\xi_2}(\xi_2)(gH)}\ d\mu_{G/H}(gH) $$ is a non-zero scalar.
\noindent As proved in \cite[Theorem 2.1]{kam:12}, for admissible wavelets $\xi_1,\xi_2$,
 the operator  $ W_{\xi_2}^*W_{\xi_1}$ intertwines with $\pi$. Thus,   there exists a scalar $c$ such that $W_{\xi_2}^* W_{\xi_1} = cI$, where $I$ is the identity operator. Therefore, we have
\begin{align*}
c\langle \eta_1,\eta_2 \rangle
&= \langle W_{\xi_1}\eta_1,W_{\xi_2}\eta_2 \rangle\\
&=\int_{G/H}W_{\xi_1}\eta_1(gH)\ \overline{W_{\xi_2}\eta_2(gH)}\ d\mu(gH),
\end{align*}
for all $\eta_1,\eta_2 \in \mathcal{H}_{\pi} $. Taking $ \eta_1=\xi_1,  \eta_2=\xi_2$, 
 we have $$c\langle \xi_1,\xi_2 \rangle = \int_{G/H}W_{\xi_1}\xi_1(gH)\ \overline{W_{\xi_2}\xi_2(gH)}\ d\mu(gH).$$
If $(\xi_1,\xi_2)$ is an admissible pair, then 
$\langle \eta_1,\eta_2 \rangle= C_{\xi_1\xi_2}^{-1}\langle W_{\xi_1}\eta_1, W_{\xi_2}\eta_2 \rangle$ which further implies that every 
 $\eta\in \mathcal{H}_{\pi}$ can be decomposed as 
\begin{align*}
\eta &= C_{\xi_1\xi_2}^{-1}\int_{G/H}\left(\frac{\rho(e)}{\rho(g)}\right)^{1/2}W_{\xi_1}\eta(gH)\ \pi(gH)\xi_2\ d\mu(gH),
\end{align*}
where  the integral is defined in the weak sense.
\begin{defi}
An admissible pair $(\xi_1,\xi_2)$ is said to have homogeneous approximation property in $\mathcal{H}_{\pi}$, if for any $\eta \in \mathcal{H}_{\pi}$ and $\epsilon > 0,$ there exists some compact subset $K$ of $G/H$ such that every compact subset $K^{'}$ of $G/H$ with $K \subseteq K^{'}$ satisfies
$$\left\|\pi(sH)\eta - C_{\xi_1\xi_2}^{-1}\int_{s\cdot K^{'}}\left(\frac{\rho(e)}{\rho(a)}\right)^{1/2}W_{\xi_1}\pi(sH)\eta(aH)\ \pi(aH)\xi_2\ d\mu_{G/H}(aH)\right\|\leq \epsilon,$$
for all $s \in G.$
\end{defi}
\begin{thm}
Let $G/H$ be a $\sigma $-compact homogeneous space with  an irreducible square integrable representation $\pi$. Then every admissible pair $(\xi_1,\xi_2)$ possesses homogeneous approximation property in $\mathcal{H}_{\pi}$.  
\end{thm}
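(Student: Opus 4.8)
The plan is to mimic the classical proof of the homogeneous approximation property for coorbit spaces (as in the work of Feichtinger--Gr\"ochenig and its descendants), adapting it to the quotient setting. The key point is that the reconstruction formula
\[
\eta = C_{\xi_1\xi_2}^{-1}\int_{G/H}\left(\frac{\rho(e)}{\rho(g)}\right)^{1/2}W_{\xi_1}\eta(gH)\,\pi(gH)\xi_2\,d\mu_{G/H}(gH)
\]
already established in the excerpt means that, for a fixed $\eta$, the tail of this integral over the complement of a large compact set is small in norm; one then has to transport this smallness uniformly in the translation parameter $s\in G$ by exploiting the covariance $W_{\xi_1}\pi(sH)\eta(aH)=W_{\xi_1}\eta(s^{-1}aH)\cdot(\text{a }\rho\text{-factor})$, together with $G$-quasi-invariance of $\mu_{G/H}$, so that translating the domain $K'\mapsto s\cdot K'$ compensates exactly for translating $\eta\mapsto\pi(sH)\eta$.

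First I would fix $\eta\in\mathcal H_\pi$ and $\epsilon>0$ and, using $\sigma$-compactness of $G/H$, write $G/H=\bigcup_n K_n$ with $K_n$ compact and increasing. From the reconstruction formula the vector-valued integrals $\int_{K_n}(\rho(e)/\rho(a))^{1/2}W_{\xi_1}\eta(aH)\,\pi(aH)\xi_2\,d\mu_{G/H}(aH)$ converge weakly (hence, by a standard argument using the reproducing kernel, in norm) to $C_{\xi_1\xi_2}\,\eta$; so I can pick $K=K_{n_0}$ such that
\[
\left\|\eta-C_{\xi_1\xi_2}^{-1}\int_{K}\left(\frac{\rho(e)}{\rho(a)}\right)^{1/2}W_{\xi_1}\eta(aH)\,\pi(aH)\xi_2\,d\mu_{G/H}(aH)\right\|\le\epsilon.
\]
Next, for an arbitrary compact $K'\supseteq K$ and arbitrary $s\in G$, I would apply $\pi(sH)$ to this inequality (it is unitary, so the norm is unchanged), and then inside the integral use the two covariance identities $W_{\xi_1}\pi(sH)\eta(aH)=(\rho(s)/\rho(e))^{1/2}(\rho(e)/\rho(a))^{1/2}\langle\pi(sH)\eta,\pi(aH)\xi_1\rangle$ rewritten via $\langle\eta,\pi(s^{-1}aH)\xi_1\rangle$, and $\pi(sH)\pi(aH)\xi_2=\pi(saH)\xi_2$, together with the change of variables $aH\mapsto saH$ under which $d\mu_{G/H}$ transforms by the cocycle $\rho(sa)/\rho(a)$ coming from the rho function. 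The $\rho$-factors are designed precisely so that, after the substitution, the integral over $K$ with $\eta$ becomes the integral over $s\cdot K$ with $\pi(sH)\eta$; since $K\subseteq K'$ gives $s\cdot K\subseteq s\cdot K'$ and the integrand is, up to the reconstruction identity, the reproducing-kernel convolution of a fixed $L^2$ function with itself, enlarging the domain from $s\cdot K$ to $s\cdot K'$ only decreases the tail. Hence the displayed inequality in the definition holds with this $K$, uniformly in $s$.

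The main obstacle I anticipate is bookkeeping the $\rho$-function cocycles correctly through the change of variables $aH\mapsto saH$: the measure $\mu_{G/H}$ is only relatively $G$-invariant, $d\mu_{G/H}(saH)=\dfrac{\rho(sa)}{\rho(a)}\,d\mu_{G/H}(aH)$ (or its reciprocal, depending on convention), and one must check that every $(\rho(e)/\rho(\cdot))^{1/2}$ weight appearing in $W_{\xi_1}$, in the reconstruction formula, and in the definition of the homogeneous approximation property combines so that the net effect of $\pi(sH)$ plus the substitution is exactly the replacement of $(\eta,K)$ by $(\pi(sH)\eta,s\cdot K)$ with no leftover scalar depending on $s$. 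A secondary, more technical point is justifying that the weak-integral reconstruction formula can be upgraded to norm convergence of the truncated integrals over an exhausting sequence of compact sets; this is where I would invoke that $W_{\xi_1}(\mathcal H_\pi)$ is a reproducing kernel Hilbert space with pointwise bounded kernel (stated in the Introduction), which forces weakly convergent bounded nets of the relevant form to converge in norm, or alternatively argue directly in $L^2(G/H)$ via Cauchy--Schwarz on the kernel. Once these two points are settled, the rest is a routine estimate and the uniformity in $s$ is automatic.
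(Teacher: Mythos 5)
Your proposal follows essentially the same route as the paper: express the error as the tail of the reconstruction integral over the complement of $s\cdot K'$, control it by duality and Cauchy--Schwarz using the admissibility of $\xi_2$, use the covariance of $W_{\xi_1}$ together with the relative invariance of $\mu_{G/H}$ to turn this into an $s$-independent tail over the complement of $K$, and then choose $K$ from a compact exhaustion via dominated convergence. The one point you should make explicit is that the monotonicity in $K'$ has to be extracted at the level of the scalar tail integral $\int_{(s\cdot K')^{c}}|W_{\xi_1}\pi(sH)\eta|^{2}\,d\mu_{G/H}$ obtained after the Cauchy--Schwarz step (a nonnegative integrand, hence monotone in the domain), rather than for the vector-valued truncated reconstruction itself, whose error norm is not a priori decreasing as the truncation set grows --- this is precisely how the paper's proof is organized.
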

\begin{proof}
Let $K$ be an arbitrary compact subset of $G/H$ and $K^{'}$ be a compact subset of $G/H$ such that $K\subseteq K^{'}$. Then for $\eta \in \mathcal{H}_{\pi}$ and $sH \in G/H,$ we have
\begin{align*}
&\left\|\pi(sH)\eta - C_{\xi_1\xi_2}^{-1}\int_{s\cdot K^{'}}\left(\frac{\rho(e)}{\rho(a)}\right)\langle \pi(sH)\eta, \pi(aH)\xi_1 \rangle\ \pi(aH)\xi_2\  d\mu_{G/H}(aH)\right\|^2\\
&=\sup\limits_{\|\zeta\|=1}\left|\left(\left\langle  \pi(sH)\eta - C_{\xi_1\xi_2}^{-1}\int_{s\cdot K^{'}}\left(\frac{\rho(e)}{\rho(a)}\right)\langle \pi(s)\eta,
\pi(aH)\xi_1 \rangle\right.\right. \right.\\
&\qquad\qquad \qquad \qquad \left.\times \pi(aH)\xi_2\ d\mu_{G/H}(aH)\right) , \left.\zeta\rangle  \right|^2\\
&=\sup\limits_{\|\zeta\|=1}\left|  C_{\xi_1\xi_2}^{-1} \int_{aH \notin s\cdot K^{'}}\left(\frac{\rho(e)}{\rho(a)}\right)\langle \pi(sH)\eta, \pi(aH)\xi_1 \rangle\right. \\
&\qquad \qquad \qquad \qquad \times \left. \left\langle\pi(aH)\xi_2,\zeta \right\rangle d\mu_{G/H}(aH) \right|^{2}\\
&\leq C_{\xi_1\xi_2}^{-2}\int_{aH \notin s\cdot K^{'}}\left(\frac{\rho(e)}{\rho(a)}\right)\big|\langle \pi(sH)\eta, \pi(aH)\xi_1 \rangle\big|^{2}\ d\mu_{G/H}(aH)\\
&\qquad \qquad \qquad \qquad\times \sup\limits_{\|\zeta\| = 1}\int_{G/H}\left(\frac{\rho(e)}{\rho(a)}\right)\big|\langle\pi(aH)\xi_2,\zeta \rangle\big|^2\ d\mu_{G/H}(aH)\\
&\leq C_{\xi_1\xi_2}^{-2}C_{\xi_2}\int_{aH \notin s\cdot K}\left(\frac{\rho(e)}{\rho(a)}\right)\big|\langle \pi(sH)\eta, \pi(aH)\xi_1 \rangle\big|^{2}\ d\mu_{G/H}(aH)\\
&\leq C_{\xi_1\xi_2}^{-2}C_{\xi_2}\int_{aH \notin  K}\left(\frac{\rho(e)}{\rho(sa)}\right)\big|\langle \eta, \pi(aH)\xi_1 \rangle\big|^{2}\ d\mu_{G/H}(saH)\\
&= C_{\xi_1\xi_2}^{-2}C_{\xi_2}\int_{aH \notin K}\left(\frac{\rho(e)}{\rho(a)}\right)\big|\langle \eta, \pi(aH)\xi_1 \rangle\big|^{2}\ d\mu_{G/H}(aH).
\end{align*}
But, $G/H$ is $\sigma$-compact, therefore, there exists a sequence  $\{K_n\}_{n\in \mathbb{N}}$ of compact subsets of $G/H$ such that $K_n\subset K_{n+1}$  for all $n\in \mathbb{N}$ and   $G/H = \bigcup\limits_{n=1}^{\infty}K_n$. Since $\left|\chi_{K_n}W_{\xi_1}\eta - W_{\xi_1}\eta\right|^2  \to 0 $, as $n\to \infty$ and $\left|\chi_{K_n}W_{\xi_1}\eta - W_{\xi_1}\eta\right|^2 \leq 4|W_{\xi_1}\eta|^2$,
therefore by Lebesgue dominated convergence theorem, it follows that
\begin{flalign*}
&\int_{G/H} \left|\chi_{K_n}W_{\xi_1}\eta(aH) - W_{\xi_1}\eta(aH)\right|^2\ d\mu_{G/H}(aH)\to 0, \ \text{ as } n\to \infty. & 
\end{flalign*}
 Thus, there exists a $K_n$ such that 
\begin{flalign*}
&\int_{G/H} \left|\chi_{K_n}W_{\xi_1}\eta(aH) - W_{\xi_1}\eta(aH)\right|^2 d\mu_{G/H}(aH)\leq \frac{\epsilon}{ C_{\xi_1\xi_2}^{-2}C_{\xi_1}},&
\end{flalign*}
or, equivalently,
\begin{flalign*}
&\int_{aH\notin K_n} \left(\frac{\rho(e)}{\rho(a)}\right)|\langle \eta, \pi(aH)\xi_1 \rangle|^{2}\ d\mu_{G/H}(aH)\leq \frac{\epsilon}{ C_{\xi_1\xi_2}^{-2}C_{\xi_1}}.&
\end{flalign*} 
Therefore, we can choose  a compact subset  $K$ of $G/H$ such that
\begin{align*} \left\|\pi(sH)\eta - C_{\xi_1\xi_2}^{-1}\int_{s\cdot K^{'}}\left(\frac{\rho(e)}{\rho(a)}\right)\langle \pi(sH)\eta, \pi(aH)\xi_1 \rangle \pi(aH)\xi_2\ d\mu_{G/H}(aH)\right\|\leq \epsilon\end{align*}
 for every compact subset $K^{'}$ of $G/H$ containing $K$ and every $s\in G.$ 
\end{proof}
\noindent We now  consider similitude group $ G=(\mathbb{R}^+\times S0(n))\ltimes \mathbb{R}^n,$ with group operation 
$$(a,r_{\theta},t)(a_1,{r_{\theta}}_1,t_1) = (aa_1,r_{\theta}{r_{\theta}}_1, t+ ar_{\theta}t_1 ). $$ 
The left Haar measure on $G$ is given by $ \frac{dad\theta dt}{a^{n+1}}.$ 
The mapping $\pi: G \to \mathcal{U}(L^2(\mathbb{R}^n))$ defined by
$$\pi(a,r_{\theta},t)f(x) = a^{-n/2} \psi\left(r_{\theta}^{-1}\left(\frac{x-t}{a}\right)\right)$$ is a square integrable irreducible representation of $G$. For $\psi\in L^2(\mathbb{R}^n)$, $$C_{\psi} = \int_{\mathbb{R}^{+}}\int_{S0(n)}\frac{\widehat{|\psi}(ar_\theta^{-1}(\omega))|^2}{a}\ da\ d\theta,$$ is  independent of almost every  $\omega \in \mathbb{R}^n$ (see \cite{ali;00}). 
Also, for an admissible pair $(\psi_1,\psi_2)$,
$$C_{\psi_1\psi_2} = \int_{\mathbb{R}^{+}}\int_{S0(n)}\frac{\overline{\widehat{\psi_1}(ar_\theta^{-1}(\omega))}\ \widehat{\psi_2}(ar_\theta^{-1}(\omega))}{a}\ da\ d\theta$$ 
is independent of $\omega$ upto a measure zero set.\\
\noindent In the next result, we further deal with the point-wise homogeneous approximation property for wavelet transform on similitude group. The idea of  results is motivated by \cite{Liu:09} and \cite{yu;16}.   
 \begin{thm}
Let $(\psi_1,\psi_2)$ be an admissible pair in $L^2(\mathbb{R}^n)$. For $f\in L^2(\mathbb{R}^n)$ and $A_2 > A_1 > 0,$ define
\begin{align*}
f_{A_1A_2}(x)= C_{\psi_1\psi_2}^{-1}\int_{A_1}^{A_2}\int_{S0(n)}\int_{\mathbb{R}^n} \langle f, \pi(a,r_{\theta},t)\psi_1\rangle\ \pi(a,r_{\theta},t)\psi_2(x)\ \frac{dtd\theta da}{a^{n+1}}.
\end{align*}
Then, $f_{A_1A_2} \in L^2(\mathbb{R}^n)$ and
\begin{align*}
\widehat{{f_{A_1A_2}}}(\omega) = C_{\psi_1\psi_2}^{-1}\widehat{f}(\omega)\int_{A_1}^{A_2}\int_{S0(n)} \widehat{\psi_{1}}(ar_\theta^{-1}(\omega))\ \widehat{\psi_2}(ar_\theta^{-1}(\omega))\ \frac{d\theta da}{a}.
\end{align*}
\end{thm}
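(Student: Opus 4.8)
My plan is to carry the whole computation out on the Fourier side, where the single identity doing the work is the transformation law
$$\widehat{\pi(a,r_{\theta},t)\psi}(\omega)=a^{n/2}\,e^{-2\pi i t\cdot\omega}\,\widehat{\psi}\bigl(ar_{\theta}^{-1}(\omega)\bigr),$$
which drops out of the definition of $\pi$ by the change of variable $y=r_{\theta}^{-1}\bigl((x-t)/a\bigr)$ together with $r_{\theta}^{T}=r_{\theta}^{-1}$. I would first settle the assertion $f_{A_1A_2}\in L^{2}(\mathbb{R}^{n})$. Since $\rho\equiv 1$ here, one has $W_{\psi_i}f(a,r_\theta,t)=\langle f,\pi(a,r_\theta,t)\psi_i\rangle$, so $F:=\chi_{[A_1,A_2]}(a)\,W_{\psi_1}f$ is a restriction of $W_{\psi_1}f\in L^{2}(G)$ and therefore lies in $L^{2}(G)$; because $W_{\psi_2}^{*}\colon L^{2}(G)\to L^{2}(\mathbb{R}^{n})$ is bounded and $f_{A_1A_2}=C_{\psi_1\psi_2}^{-1}W_{\psi_2}^{*}F$, this already gives $f_{A_1A_2}\in L^{2}(\mathbb{R}^{n})$. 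Equivalently, Cauchy--Schwarz together with the orthogonality relations $\|W_{\psi_i}h\|_{L^{2}(G)}^{2}=C_{\psi_i}\|h\|_{2}^{2}$ shows that the defining weak integral converges absolutely against every test vector, with $\|f_{A_1A_2}\|_{2}\le |C_{\psi_1\psi_2}|^{-1}\sqrt{C_{\psi_1}C_{\psi_2}}\,\|f\|_{2}$.

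To pin down $\widehat{f_{A_1A_2}}$ I would pair it with an arbitrary $g\in L^{2}(\mathbb{R}^{n})$. By the definition of the weak integral and Plancherel,
\begin{align*}
\langle f_{A_1A_2},g\rangle=C_{\psi_1\psi_2}^{-1}\int_{A_1}^{A_2}\!\!\int_{SO(n)}\!\!\int_{\mathbb{R}^{n}}\bigl\langle\widehat{f},\widehat{\pi(a,r_\theta,t)\psi_1}\bigr\rangle\,\bigl\langle\widehat{\pi(a,r_\theta,t)\psi_2},\widehat{g}\bigr\rangle\,\frac{dt\,d\theta\,da}{a^{n+1}}.
\end{align*}
Inserting the transformation law and fixing $(a,\theta)$, the inner integral over $t$ becomes a Parseval pairing: with $u(t)=\langle f,\pi(a,r_\theta,t)\psi_1\rangle$ and $v(t)=\langle g,\pi(a,r_\theta,t)\psi_2\rangle$ one has $\widehat{u}(\omega)=a^{n/2}\widehat{f}(\omega)\,\overline{\widehat{\psi_1}(ar_\theta^{-1}\omega)}$ and $\widehat{v}(\omega)=a^{n/2}\widehat{g}(\omega)\,\overline{\widehat{\psi_2}(ar_\theta^{-1}\omega)}$, so that
\begin{align*}
\int_{\mathbb{R}^{n}}\langle f,\pi(a,r_\theta,t)\psi_1\rangle\,\langle\pi(a,r_\theta,t)\psi_2,g\rangle\,dt
&=\int_{\mathbb{R}^{n}}u(t)\,\overline{v(t)}\,dt\\
&=a^{n}\!\int_{\mathbb{R}^{n}}\widehat{f}(\omega)\,\overline{\widehat{g}(\omega)}\;\overline{\widehat{\psi_1}(ar_\theta^{-1}\omega)}\,\widehat{\psi_2}(ar_\theta^{-1}\omega)\,d\omega.
\end{align*}
Substituting this back and interchanging the $\omega$-integration with the integration over the compact set $\{A_1\le a\le A_2\}\times SO(n)$ by Fubini gives
\begin{align*}
\langle f_{A_1A_2},g\rangle=C_{\psi_1\psi_2}^{-1}\int_{\mathbb{R}^{n}}\widehat{f}(\omega)\,\overline{\widehat{g}(\omega)}\left(\int_{A_1}^{A_2}\!\!\int_{SO(n)}\overline{\widehat{\psi_1}(ar_\theta^{-1}\omega)}\,\widehat{\psi_2}(ar_\theta^{-1}\omega)\,\frac{d\theta\,da}{a}\right)d\omega,
\end{align*}
and, $g$ being arbitrary, Plancherel yields the stated formula for $\widehat{f_{A_1A_2}}$ (with the factor $\overline{\widehat{\psi_1}}\,\widehat{\psi_2}$ under the integral, in accordance with the definition of $C_{\psi_1\psi_2}$).

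The step I expect to be the main obstacle is making the $t$-integration and the ensuing Fubini interchange legitimate. The Parseval identity above requires $u,v\in L^{2}(\mathbb{R}^{n},dt)$, that is, $\widehat{f}(\cdot)\,\overline{\widehat{\psi_1}(ar_\theta^{-1}\cdot)}$ and $\widehat{g}(\cdot)\,\overline{\widehat{\psi_2}(ar_\theta^{-1}\cdot)}$ in $L^{2}(\mathbb{R}^{n})$; this may fail at an individual $(a,\theta)$, but it holds for almost every $(a,\theta)$ with $A_1\le a\le A_2$, since by Tonelli and the a.e.\ $\omega$-independence of the admissibility constant,
\begin{align*}
\int_{A_1}^{A_2}\!\!\int_{SO(n)}\!\!\int_{\mathbb{R}^{n}}|\widehat{f}(\omega)|^{2}\,|\widehat{\psi_1}(ar_\theta^{-1}\omega)|^{2}\,d\omega\,\frac{d\theta\,da}{a}
&\le\int_{\mathbb{R}^{n}}|\widehat{f}(\omega)|^{2}\,C_{\psi_1}\,d\omega\\
&=C_{\psi_1}\|f\|_{2}^{2}<\infty,
\end{align*}
and symmetrically with $(g,\psi_2)$; the same estimate gives absolute convergence of the $t$-integral for a.e.\ $(a,\theta)$, and, together with two applications of Cauchy--Schwarz (in $\omega$ and then in $(a,\theta)$), it licenses the Fubini interchange over the bounded scale range. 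With this bookkeeping in place, the identity is just a rigorous form of the heuristic ``$\int_{\mathbb{R}^{n}}e^{2\pi i t\cdot(\eta-\omega)}\,dt=\delta(\eta-\omega)$''.
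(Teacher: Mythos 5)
Your proof is correct and follows essentially the same route as the paper's: establish absolute convergence of the defining integral, pair with a test function $g$, apply Plancherel/Parseval to the inner $t$-integral via $\widehat{\pi(a,r_\theta,t)\psi}(\omega)=a^{n/2}e^{-2\pi i t\cdot\omega}\,\widehat{\psi}(ar_\theta^{-1}\omega)$, and interchange the $\omega$- and $(a,\theta)$-integrations by Fubini over the compact scale range. Your remark that the integrand should read $\overline{\widehat{\psi_1}(ar_\theta^{-1}\omega)}\,\widehat{\psi_2}(ar_\theta^{-1}\omega)$, in accordance with the definition of $C_{\psi_1\psi_2}$ and with the paper's own intermediate computation, correctly identifies a missing conjugate in the stated formula.
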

\begin{proof}
First we show that $f_{A_1A_2}$ is well defined. For this, we compute
\begin{align*}
&\int_{A_1}^{A_2}\int_{S0(n)}\int_{\mathbb{R}^n} \big|\langle f, \pi(a,r_{\theta},t)\psi_1\rangle\ \pi(a,r_{\theta},t)\psi_2(x)\big|\ \frac{dtd\theta da}{a^{n+1}}\\
&\leq\int_{A_1}^{A_2}\int_{S0(n)}\left( \int_{\mathbb{R}^n} \big|\langle f, \pi(a,r_{\theta},t)\psi_1\rangle \big|^2 dt\right)^{\frac{1}{2}}\left( \int_{\mathbb{R}^n}\big|\pi(a,r_{\theta},t)\psi_2(x)\big|^2dt \right)^{\frac{1}{2}}\frac{d\theta da}{a^{n+1}}\\
&\leq\int_{A_1}^{A_2}\int_{S0(n)}\left( \int_{\mathbb{R}^n} \big|\langle f, \pi(a,r_{\theta},t)\psi_1\rangle \big|^2 dt\right)^{\frac{1}{2}}\|\psi_2\|_{2}\ \frac{d\theta da}{a^{n+1}}\\
&\leq \|\psi_2\|_{2}\left(\int_{A_1}^{A_2}\int_{S0(n)}\int_{\mathbb{R}^n} \big|\langle f, \pi(a,r_{\theta},t)\psi_1\rangle \big|^2 \frac{ dtd\theta da}{a^{n+1}}\right)^{1/2}\left(\int_{A_1}^{A_2}\int_{S0(n)}\frac{d\theta da}{a^{n+1}} \right)^{1/2}\\
&=\|\psi_2\|_{2}\sqrt{C_{\psi_1}}\|f\|_2\ n^{-\frac{1}{2}}(A_1^{-n}-A_2^{-n})< \infty.
\end{align*}
Let $g\in  L^1\cap L^2(\mathbb{R}^n)$ be arbitrary. Then 
\begin{align*}
&\int_{\mathbb{R}^n} \left|g(x)f_{A_1A_2}(x)\right|dx \\
&\leq \int_{\mathbb{R}^n}\left(|g(x)| \int_{A_1}^{A_2}\int_{S0(n)}\int_{\mathbb{R}^n} |\langle f, \pi(a,r_{\theta},t)\psi_1\rangle \pi(a,r_{\theta},t)\psi_2(x)|\frac{dtd\theta da}{a^{n+1}}    \right)dx\\
& < \infty.
\end{align*}
Thus, using Tonneli theorem, the function  $g(\cdot)\langle f, \pi(\cdot,\cdot,\cdot)\psi_1\rangle \pi(\cdot,\cdot,\cdot)\psi_2(\cdot) $ is integrable.
Again, we have
\begin{align*}
& \int_{\mathbb{R}^n} f_{A_1A_2}(x)\overline{g(x)}\ dx \\
 &= C_{\psi_1\psi_2}^{-1} \int_{\mathbb{R}^n}\overline{g(x)}\left(\int_{A_1}^{A_2}\int_{S0(n)}\int_{\mathbb{R}^n} \langle f, \pi(a,r_{\theta},t)\psi_1\rangle \pi(a,r_{\theta},t)\psi_2(x)\ \frac{dtd\theta da}{a^{n+1}}\right)dx\\
 &=C_{\psi_1\psi_2}^{-1}\int_{A_1}^{A_2}\int_{S0(n)}\int_{\mathbb{R}^n} \langle f, \pi(a,r_{\theta},t)\psi_1\rangle\left(\int_{\mathbb{R}^n}\overline{g(x)}\pi(a,r_{\theta},t)\psi_2(x)dx\right)\ \frac{dtd\theta da}{a^{n+1}}\\
 &=C_{\psi_1\psi_2}^{-1}\int_{A_1}^{A_2}\int_{S0(n)}\int_{\mathbb{R}^n} \langle f, \pi(a,r_{\theta},t)\psi_1\rangle\langle \pi(a,r_{\theta},t)\psi_2, g\rangle\ \frac{dtd\theta da}{a^{n+1}}.
 \end{align*}
 Thus, 
 \begin{flalign*}
 \left|\int_{\mathbb{R}^n} f_{A_1A_2}(x)\overline{g(x)}\ dx\right|
& \leq C_{\psi_1\psi_2}^{-1}\int_{\mathbb{R}^+}\int_{S0(n)}\int_{\mathbb{R}^n}| \langle f, \pi(a,r_{\theta},t)\psi_1\rangle\langle \pi(a,r_{\theta},t)\psi_2, g\rangle|\ \frac{dtd\theta da}{a^{n+1}}&\\
&\leq  C_{\psi_1\psi_2}^{-1}\|f\|_2C_{\psi_1}C_{\psi_2}\|g\|_2.
 \end{flalign*}
Therefore, by  Riesz representation theorem $f_{A_1A_2}\in L^2(\mathbb{R}^n).$ 
Now on using Plancherel Theorem, we have
\begin{align*}
&&&\langle f_{A_1A_2}, g\rangle &\\
 &&&= C_{\psi_1\psi_2}^{-1}\int_{A_1}^{A_2}\int_{S0(n)}\int_{\mathbb{R}^n} \langle f, \pi(a,r_{\theta},t)\psi_1\rangle\ \langle \pi(a,r_{\theta},t)\psi_2, g\rangle\ \frac{dtd\theta da}{a^{n+1}} & \\
&&&= C_{\psi_1\psi_2}^{-1}\int_{A_1}^{A_2}\int_{S0(n)}\int_{\mathbb{R}^n} (f*\pi(a,r_\theta,0)\psi_1^{\star}(t))\ (g*\pi(a,r_\theta,0)\psi_2^{\star}(t))\ \frac{dtd\theta da}{a^{n+1}} &\\
&&&= C_{\psi_1\psi_2}^{-1}\int_{A_1}^{A_2}\int_{S0(n)}\int_{\mathbb{R}^n}\widehat{f}(\omega)\ \widehat{\pi(a,r_\theta,0)\ \psi_1^{\star}}(\omega)\ \overline{\widehat{g}(\omega)}\ \overline{\widehat{\pi(a,r_\theta,0)\psi_2^{\star}}}(\omega)\ \frac{d\omega d\theta da}{a^{n+1}} &\\
&&&= C_{\psi_1\psi_2}^{-1}\int_{\mathbb{R}^n}\widehat{f}(\omega) \left( \int_{A_1}^{A_2}\int_{S0(n)}\overline{\widehat{\psi_1}(ar_\theta^{-1}(\omega))}\ \widehat{\psi_2}(ar_\theta^{-1}(\omega))\ \frac{da d\theta}{a} \right) \overline{\widehat{g}(\omega)} d\omega.
\end{align*}
Since $g \in L^{1}\cap L^2(\mathbb{R}^n)$, therefore we have
\begin{align*}
\widehat{f_{A_1A_2}}(\omega) = C_{\psi_1\psi_2}^{-1}\widehat{f}(\omega)\int_{A_1}^{A_2}\int_{S0(n)} \widehat{\psi_{1}}(ar_\theta^{-1}(\omega))\ \widehat{\psi_2}(ar_\theta^{-1}(\omega))\frac{dad\theta}{a}.\qquad\qquad \qquad \qquad\qedhere
\end{align*}
 \end{proof}
\begin{thm}
Let $(\psi_1,\psi_2)$ be an admissible pair in $L^2(\mathbb{R}^n)$ and  $f\in L^2(\mathbb{R}^n)$ with $\widehat{f} \in L^1(\mathbb{R}^n)$. Then for any $\epsilon,p > 0$, there exist $A_2 > A_1 > 0 $ such that for any $(a,r_{\theta},t) \in G$ with $0< p\leq a$ and $0 < A_1^{'}\leq A_1$, $A_2^{'} \geq A_2,$  the following holds
\begin{align*}
&\left\|\pi(a,r_{\theta},t)f(x)-\left(C_{\psi_1\psi_2}^{-1}\int_{A_1^{'}a}^{A_2^{'}a}\int_{S0(n)}\int_{\mathbb{R}^n}\langle \pi(a,r_{\theta},t)f,\pi(a_1,{r_{\theta}}_1,t_1)\psi_1\rangle\right.\right.\\
&\qquad\qquad \qquad \qquad \qquad \qquad \left.\left.\times \pi(a_1,{r_{\theta}}_1,t_1)\psi_2(x)\ \frac{dt_1d\theta_1da_1}{a_1^{n+1}} \right)\right\|_{\infty}\leq \epsilon.
\end{align*}
\end{thm}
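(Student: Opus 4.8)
The plan is to reduce the $L^\infty$-estimate for $\pi(a,r_\theta,t)f$ to the Fourier-side formula from the previous theorem, and then to exploit the covariance of the wavelet transform under the left action of $G$. First I would observe that, by the previous theorem applied to $\pi(a,r_\theta,t)f$ in place of $f$, the inner double-integral displayed in the statement equals a function $\big(\pi(a,r_\theta,t)f\big)_{A_1'a,\,A_2'a}$ whose Fourier transform is
\[
\widehat{f}(\omega)\,\overline{e^{2\pi i t\cdot\omega}}\,a^{n/2}\,C_{\psi_1\psi_2}^{-1}\int_{A_1'a}^{A_2'a}\int_{SO(n)}\widehat{\psi_1}(a_1 r_{\theta_1}^{-1}r_\theta^{-1}(a\omega))\,\widehat{\psi_2}(a_1 r_{\theta_1}^{-1}r_\theta^{-1}(a\omega))\,\frac{da_1\,d\theta_1}{a_1},
\]
up to a routine change of variables that absorbs the $(a,r_\theta,t)$-action into a substitution $a_1\mapsto a a_1$, $r_{\theta_1}\mapsto r_\theta r_{\theta_1}$; after the substitution the $a_1$-range becomes $[A_1',A_2']$ and, by rotation-invariance of the normalized Haar measure on $SO(n)$ and the essential independence of $C_{\psi_1\psi_2}$ on the direction of its argument, the rotational integral is the same function of $|\omega|$ that appears for $f$ itself. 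Thus the difference $\pi(a,r_\theta,t)f-\big(\pi(a,r_\theta,t)f\big)_{A_1'a,A_2'a}$ has Fourier transform $e^{-2\pi i t\cdot\omega}\,\widehat f(\omega)\,\big(1-m_{A_1',A_2'}(\omega)\big)$ for an explicit multiplier $m$ that does not depend on $(a,r_\theta,t)$ at all once $a\ge p$ — here the hypothesis $p\le a$ is used only to guarantee $A_1'a\le A_1'$-type control is unnecessary, since after rescaling the window is $[A_1',A_2']$ independently of $a$.

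The second step is the $L^\infty$-bound. Since $\widehat f\in L^1(\mathbb R^n)$, the Hausdorff--Young / Fourier inversion estimate gives
\[
\big\|\pi(a,r_\theta,t)f-(\pi(a,r_\theta,t)f)_{A_1'a,A_2'a}\big\|_\infty
\le \int_{\mathbb R^n}|\widehat f(\omega)|\,\big|1-m_{A_1',A_2'}(\omega)\big|\,d\omega,
\]
and the right-hand side is now free of $(a,r_\theta,t)$. It remains to make this integral small by choosing $A_1$ small and $A_2$ large. For this I would use the Calderón-type admissibility identity: the full integral $\int_0^\infty\int_{SO(n)}\widehat{\psi_1}(a_1 r_{\theta_1}^{-1}\omega)\,\widehat{\psi_2}(a_1 r_{\theta_1}^{-1}\omega)\,\frac{da_1\,d\theta_1}{a_1}=C_{\psi_1\psi_2}$ for a.e.\ $\omega$, so $m_{A_1',A_2'}(\omega)\to 1$ pointwise a.e.\ as $A_1'\to 0$, $A_2'\to\infty$, while $|1-m_{A_1',A_2'}(\omega)|\le 2$ uniformly; dominated convergence against $|\widehat f|\in L^1$ then forces the tail integral below $\epsilon$ once $A_1\le$ some threshold and $A_2\ge$ some threshold, and monotonicity in the truncation window upgrades this to all $A_1'\le A_1$, $A_2'\ge A_2$.

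The main obstacle — and the step I would write most carefully — is the change of variables in the first paragraph: one must verify that conjugating the $(a_1,\theta_1)$-integration by the fixed group element $(a,r_\theta,t)$ genuinely turns the $[A_1'a,A_2'a]\times SO(n)$ window into the $[A_1',A_2']\times SO(n)$ window with the \emph{same} integrand evaluated at $r_\theta^{-1}\omega$, and that the essential constancy of $C_{\psi_1\psi_2}$ in the angular variable (stated in the text only up to a null set) is enough to conclude the multiplier depends on $|\omega|$ alone; this is where the $SO(n)$-invariance of Haar measure and the modular-function bookkeeping in $\frac{dt_1\,d\theta_1\,da_1}{a_1^{n+1}}$ must be handled honestly rather than symbolically. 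Everything after that is the soft dominated-convergence argument above.
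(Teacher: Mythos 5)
Your route is essentially the paper's: apply the Fourier--side formula of the preceding theorem to $\pi(a,r_\theta,t)f$, change variables to normalize the truncation window from $[A_1'a,A_2'a]$ to $[A_1',A_2']$, bound the sup--norm by $\|\widehat{(\cdot)}\|_1$, and finish with admissibility plus dominated convergence against $|\widehat f|\in L^1$. There is, however, one concrete slip in the bookkeeping you yourself flag as the delicate step. Since $\widehat{\pi(a,r_\theta,t)f}(\omega)=a^{n/2}e^{-2\pi i t\cdot\omega}\,\widehat f(ar_\theta^{-1}\omega)$, the substitution $\omega\mapsto a^{-1}r_\theta\omega$ that turns the window into $[A_1',A_2']$ carries a Jacobian $a^{-n}$, leaving a net factor $a^{-n/2}$ in front of $|\widehat f(\omega)|$. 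This factor is precisely why the hypothesis $p\le a$ is there: it gives the uniform bound $a^{-n/2}\le p^{-n/2}$, which is how the paper obtains an $(a,r_\theta,t)$--free dominating function. Your displayed estimate $\int|\widehat f(\omega)|\,|1-m_{A_1',A_2'}(\omega)|\,d\omega$ omits this constant, and your parenthetical explanation of the role of $p$ (that it only ensures the rescaled window is $a$--independent) is not correct --- the window is $a$--independent with or without $p$; it is the amplitude that needs $p$.

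Two smaller points. The bound $|1-m_{A_1',A_2'}(\omega)|\le 2$ is not automatic, since the truncated integral of $\overline{\widehat\psi_1}\,\widehat\psi_2$ need not be dominated by the full one; you need Cauchy--Schwarz to get $|m|\le |C_{\psi_1\psi_2}|^{-1}\sqrt{C_{\psi_1}C_{\psi_2}}$, which is a constant and suffices for dominated convergence. Likewise, ``monotonicity in the truncation window'' is only valid after you majorize $|1-m_{A_1',A_2'}|$ by the two tail integrals of $|\widehat\psi_1\widehat\psi_2|$ over $(0,A_1')$ and $(A_2',\infty)$; this is exactly why the paper splits the error into $f_{A_1'a}+f_{A_2'a}$ and puts absolute values inside before letting $A_1'\to 0$ and $A_2'\to\infty$. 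With these repairs your argument coincides with the one in the text.
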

\begin{proof}
For arbitrary $0 < A_1^{'}< A_{2}^{'}$ and $(a,r_{\theta},t)\in G$ with $p\leq a$, define
\begin{flalign*}
&f_{A_1^{'}a}(x)=C_{\psi_1\psi_2}^{-1} \int_{0}^{A_1^{'}a}\int_{S0(n)}\int_{\mathbb{R}^n}\langle \pi(a,r_{\theta},t)f,\pi(a_1,{r_{\theta_1}},t_1)\psi_1\rangle &\\
&  \qquad \qquad \qquad \qquad \qquad \qquad \qquad \times \pi(a_1,{r_{\theta}}_1,t_1)\psi_2(x)\ \frac{dt_1d\theta_1da_1}{a_1^{n+1}}
\end{flalign*}
and
 \begin{flalign*}
&f_{A_2^{'}a}(x)=C_{\psi_1\psi_2}^{-1} \int_{A_2^{'}a}^{\infty}\int_{S0(n)}\int_{\mathbb{R}^n}\langle \pi(a,r_{\theta},t)f,\pi(a_1,{r_{\theta_1}},t_1)\psi_1\rangle &\\
& \qquad \qquad \qquad \qquad \qquad \qquad \qquad \times\pi(a_1,{r_{\theta}}_1,t_1)\psi_2(x)\ \frac{dt_1d\theta_1da_1}{a_1^{n+1}}.
\end{flalign*}
Then,
\begin{align*}
&\left\|\pi(a,r_{\theta},t)f(x)-\left(C_{\psi_1\psi_2}^{-1}\int_{A_1^{'}a}^{A_2^{'}a}\int_{S0(n)}\int_{\mathbb{R}^n}\left(\langle \pi(a,r_{\theta},t)f,\pi(a_1,{r_{\theta}}_1,t_1)\psi_1\rangle\right)\right.\right.\\
&\qquad \qquad \qquad \qquad  \qquad \left.\left.\times\  \left(\pi(a_1,{r_{\theta}}_1,t_1)\psi_2(x)\right)\ \frac{dt_1d\theta_1da_1}{a_1^{n+1}}\right) \right\|_{\infty}\\
& = \Big\| f_{A_1^{'}a}(x)+ f_{A_2^{'}a}(x)  \|_{\infty} \leq  \|\widehat{f_{A_1^{'}a}}\|_{1} +  \|\widehat{f_{A_2^{'}a}}\Big\|_{1}\\
&\leq|C_{\psi_1\psi_2}^{-1}|
\int_{\mathbb{R}^n}\left|\widehat{\pi(a,r_{\theta},t)f}(\omega)\right|\int_{S0(n)}\int_{0}^{A_1^{'}a} |\widehat{\psi_{1}}(a_1 {r_{\theta}}_1^{-1}(\omega))\ \widehat{\psi_2}(a_1 {r_{\theta}}_1^{-1}(\omega))| \ \frac{da_1d\theta_1}{a_1}d\omega \\
&\qquad+ |C_{\psi_1\psi_2}^{-1}|\int_{\mathbb{R}^n}\left|\widehat{\pi(a,r_{\theta},t)f}(\omega)\right|\int_{S0(n)}\int_{A_2^{'}a}^{\infty} |\widehat{\psi_{1}}(a_1{r_\theta}_1^{-1}(\omega))\ \widehat{\psi_2}(a_1 {r_{\theta}}_1^{-1}(\omega))|\ \frac{da_1d\theta_1}{a_1}d\omega. \\
 &\leq |C_{\psi_1\psi_2}^{-1}|\int_{\mathbb{R}^n}|a|^{n/2}|\widehat{f}(ar_{\theta}^{-1}\omega)|\int_{S0(n)}\int_{0}^{A_1^{'}a} |\widehat{\psi_{1}}(a_1 {r_{\theta}}_1^{-1}(\omega))\ \widehat{\psi_2}(a_1 {r_{\theta}}_1^{-1}(\omega))|\ \frac{da_1d\theta_1}{a_1}d\omega \\
&\qquad+ |C_{\psi_1\psi_2}^{-1}|\int_{\mathbb{R}^n}|a|^{n/2}|\widehat{f}(ar_{\theta}^{-1}\omega)|\int_{S0(n)}\int_{A_2^{'}a}^{\infty} |\widehat{\psi_{1}}(a_1{r_\theta}_1^{-1}(\omega))\ \widehat{\psi_2}(a_1 {r_{\theta}}_1^{-1}(\omega))|\ \frac{da_1d\theta_1}{a_1}d\omega\\
&\leq |C_{\psi_1\psi_2}^{-1}|\int_{\mathbb{R}^n}|p|^{-n/2}|\widehat{f}(\omega)|\int_{S0(n)}\int_{0}^{A_1^{'}} |\widehat{\psi_{1}}(a_1 {r_{\theta}}_1^{-1}(\omega))\ \widehat{\psi_2}(a_1 {r_{\theta}}_1^{-1}(\omega))|\ \frac{da_1d\theta_1}{a_1}d\omega \\
&\qquad+ |C_{\psi_1\psi_2}^{-1}|\int_{\mathbb{R}^n}|p|^{-n/2}|\widehat{f}(\omega)|\int_{S0(n)}\int_{A_2^{'}}^{\infty} |\widehat{\psi_{1}}(a_1{r_\theta}_1^{-1}(\omega))\ \widehat{\psi_2}(a_1 {r_{\theta}}_1^{-1}(\omega))|\ \frac{da_1d\theta_1}{a_1}d\omega.  
\end{align*}
Since both $\psi_1$ and $\psi_2$ are admissible, therefore we get
\begin{align}\label{eq4}
\int_{S0(n)}\int_{0}^{A_1^{'}} |\widehat{\psi_{1}}(a_1{r_\theta}_1^{-1}(\omega))\ \widehat{\psi_2}(a_1{r_\theta}_1^{-1}(\omega))|\frac{da_1d\theta_1}{a_1}\leq C_{\psi_1}C_{\psi_2}< \infty
\end{align}
 and
\begin{align}\label{eq2}
\int_{S0(n)}\int_{A_2^{'}}^{\infty} |\widehat{\psi_{1}}(a_1{r_\theta}_1^{-1}(\omega))\ \widehat{\psi_2}(a_1{r_\theta}_1^{-1}(\omega))|\frac{da_1d\theta_1}{a_1}\leq C_{\psi_1}C_{\psi_2}< \infty.
\end{align}
Also,
\begin{align*}
\lim\limits_{A_1^{'}\to 0}\int_{S0(n)}\int_{0}^{A_1^{'}}|\widehat{\psi_{1}}(a_1{r_\theta}_1^{-1}(\omega))\ \widehat{\psi_2}(a_1{r_\theta}_1^{-1}(\omega))|\frac{da_1d\theta_1}{a_1}  =0
\end{align*}
and
\begin{align*}
&\lim\limits_{A_2^{'}\to \infty}\  \int_{S0(n)}\int_{0}^{A_2^{'}} |\widehat{\psi_{1}}(a_1{r_\theta}_1^{-1}(\omega))\ \widehat{\psi_2}(a_1{r_\theta}_1^{-1}(\omega))|\frac{da_1d\theta_1}{a_1} & \\
 &\qquad \qquad =\int_{S0(n)}\int_{0}^{\infty} |\widehat{\psi_{1}}(a_1{r_\theta}_1^{-1}(\omega))\ \widehat{\psi_2}(a_1{r_\theta}_1^{-1}(\omega))|\frac{da_1d\theta_1}{a_1}.
\end{align*}
Thus,
\begin{align*}
\lim\limits_{A_2^{'}\to \infty} \int_{S0(n)}\int_{A_2^{'}}^{\infty}|\widehat{\psi_{1}}(a_1{r_\theta}_1^{-1}(\omega))\ \widehat{\psi_2}(a_1{r_\theta}_1^{-1}(\omega))|\frac{da_1d\theta_1}{a_1}= 0
\end{align*}
By Lebesgue dominated convergence theorem, we get
\begin{align*}
\lim\limits_{A_1^{'} \to 0} \int_{\mathbb{R}^n}|\widehat{f}(\omega)|\int_{S0(n)}\int_{0}^{A_1^{'}} |\widehat{\psi_{1}}(a_1{r_\theta}_1^{-1}(\omega))\ \widehat{\psi_2}(a_1 {r_{\theta}}_1^{-1}(\omega))|\ \frac{da_1d\theta_1}{a_1}d\omega = 0. 
\end{align*}
and
\begin{align*}
\lim\limits_{A_2^{'} \to \infty} \int_{\mathbb{R}^n}|\widehat{f}(\omega)|\int_{S0(n)}\int_{A_2^{'}}^{\infty} |\widehat{\psi_{1}}(a_1{r_\theta}_1^{-1}(\omega))\ \widehat{\psi_2}(a_1 {r_{\theta}}_1^{-1}(\omega))|\ \frac{da_1d\theta_1}{a_1}d\omega = 0. 
\end{align*}
Therefore, we can choose  $0 < A_1\leq A_2$ such that for any $(a,r_{\theta},t)$ with $0< p\leq a$ and  $0< A_1^{'}\leq A_1, A_2<A_2^{'}$, the conclusion of the theorem holds.
\end{proof}
\section{Wavelet group of the form $B\ltimes A$}
\noindent  Let $A$ and $B$ be  locally compact, second countable, unimodular, group of type I and $\tau$ be a map from $B$ to $\text{Aut(A)},$ where $\text{Aut}(A)$ is a group of automorphism of $A$. Let $G = B\ltimes_{\tau} A$ with  group operation $(b,a)(b_1,a_1) = (bb_1, a\tau_{b}(a_1))$. The left Haar measure $\mu$ on $G$ is given by $d\mu(b,a)=  \delta_{\tau}(b)\ da\ db$, where $\delta_{\tau}$ is a positive homomorphism of $B$ satisfying  $d(a) = \delta_{\tau}(b)\ d(\tau_{b}(a)).$ Define a representation $\pi$ of $G$ on  Hilbert space $L^2(A)$ such that
$$\pi(b,a)f(x)= f_{ba}(x)= \delta_{\tau}(b)^{1/2}f(\tau_{b^{-1}}(a^{-1}x)).$$
A function $\psi \in L^1\cap L^2(A)$ is said to be a feasible wavelet, if there exists a constant $0< C_{\psi} < \infty$ such that for almost every $\gamma \in \widehat{A}$ and for  all $f,g \in L^1\cap L^2(A)$
\begin{flalign*}
&\int_{B} \text{tr}(\gamma(f)\gamma(g)\gamma(\psi \circ \tau_{b^{-1}})^\star\gamma(\psi \circ \tau_{b^{-1}} ))\ \delta_{\tau}^2(b)\ d(b) = C_{\psi}\ \text{tr}(\gamma(f)\gamma(g)).
\end{flalign*} 
It may be noted that if $A$ is a locally compact abelian group, then for a  feasible wavelet $\psi$, 
$$C_{\psi}= \int_{B}|\widehat{\psi}(\gamma \circ \tau_{b})|^2 db,$$
is independent of almost every $\gamma\in \widehat{A}. $
For the case when $A$ is a locally compact abelian group, one may refer to \cite{fas;03}.
The continuous wavelet transform with respect to a feasible wavelet $\psi$ is an operator $W_{\psi}:L^2(A)\to L^2(G)$ defined as
$W_{\psi}f(b,a)= \langle f,\pi(b,a)\psi\rangle.$
 Also,
$$W_{\psi}f(b,a)=\int_{A}f(x)\overline{ \delta_{\tau}(b)^{1/2}\psi(\tau_{b^{-1}}(a^{-1}x))}dx = \delta_{\tau}(b)^{1/2}f*(\psi\circ\tau_{b^{-1}})^{\star}(a),$$
where $(\psi\circ\tau_{b^{-1}})^{\star}(x)=\overline{(\psi\circ\tau_{b^{-1}})(x^{-1})}$.   
Using feasibility condition and  Plancherel's  theorem \cite[Theorem 7.36]{fol} for locally compact, unimodular, type I groups, one can show the following result:
\begin{lem}\label{isom}
Let $\psi$ be a feasible wavelet and $f,g\in L^2(A)$. Then 
$\langle W_{\psi}f,W_{\psi}g\rangle = C_{\psi}\langle f,g \rangle$.
\end{lem}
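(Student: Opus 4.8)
The plan is to evaluate $\langle W_{\psi}f, W_{\psi}g\rangle_{L^2(G)}$ directly, first for $f,g\in L^1\cap L^2(A)$ and then to extend to all of $L^2(A)$ by density. Using $d\mu(b,a)=\delta_{\tau}(b)\,da\,db$ together with the identity $W_{\psi}f(b,a)=\delta_{\tau}(b)^{1/2}\,f*(\psi\circ\tau_{b^{-1}})^{\star}(a)$ recorded just before the lemma, one writes
$$\langle W_{\psi}f, W_{\psi}g\rangle = \int_{B}\delta_{\tau}(b)^2\Big(\int_{A}\big(f*\phi_b\big)(a)\,\overline{\big(g*\phi_b\big)(a)}\,da\Big)\,db,\qquad \phi_b:=(\psi\circ\tau_{b^{-1}})^{\star},$$
where each $\phi_b$ lies in $L^1\cap L^2(A)$ because $\psi$ does and $\tau_{b^{-1}}$ scales Haar measure on $A$ by a positive constant, so that the feasibility condition will be applicable to $\psi\circ\tau_{b^{-1}}$.

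Next I would treat the inner $L^2(A)$-pairing, for fixed $b$, by the Plancherel theorem for the unimodular type I group $A$ (\cite[Theorem 7.36]{fol}). Writing $\gamma(h)$ for the operator-valued Fourier transform of $h$ at $\gamma\in\widehat A$ and using $\gamma(h_1*h_2)=\gamma(h_1)\gamma(h_2)$, $\gamma(h^{\star})=\gamma(h)^{*}$, and $\langle u,v\rangle_{L^2(A)}=\int_{\widehat A}\text{tr}(\gamma(u)\gamma(v)^{*})\,d\gamma$, one obtains
$$\int_{A}(f*\phi_b)\,\overline{(g*\phi_b)}\,da = \int_{\widehat A}\text{tr}\big(\gamma(f)\,\gamma(\psi\circ\tau_{b^{-1}})^{*}\,\gamma(\psi\circ\tau_{b^{-1}})\,\gamma(g)^{*}\big)\,d\gamma,$$
and a cyclic rotation of the trace, together with $\gamma(g)^{*}=\gamma(g^{\star})$ (and $g^{\star}\in L^1\cap L^2(A)$ since $A$ is unimodular), turns the integrand into $\text{tr}\big(\gamma(g^{\star})\,\gamma(f)\,\gamma(\psi\circ\tau_{b^{-1}})^{*}\,\gamma(\psi\circ\tau_{b^{-1}})\big)$ — exactly the integrand in the feasibility condition for the pair $(g^{\star},f)$.

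Then I would interchange the $B$- and $\widehat A$-integrals and invoke feasibility on the $B$-integral,
$$\int_{B}\delta_{\tau}(b)^2\,\text{tr}\big(\gamma(g^{\star})\gamma(f)\,\gamma(\psi\circ\tau_{b^{-1}})^{*}\gamma(\psi\circ\tau_{b^{-1}})\big)\,db = C_{\psi}\,\text{tr}(\gamma(g^{\star})\gamma(f)),$$
so that $\langle W_{\psi}f,W_{\psi}g\rangle = C_{\psi}\int_{\widehat A}\text{tr}(\gamma(g^{\star})\gamma(f))\,d\gamma = C_{\psi}\int_{\widehat A}\text{tr}(\gamma(f)\gamma(g)^{*})\,d\gamma = C_{\psi}\langle f,g\rangle$ by Plancherel once more. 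The interchange is legitimate because the $f=g$ specialization of the same computation runs entirely with non-negative integrands, $\text{tr}\big(\gamma(f)\gamma(\psi\circ\tau_{b^{-1}})^{*}\gamma(\psi\circ\tau_{b^{-1}})\gamma(f)^{*}\big)=\|\gamma(f)\gamma(\psi\circ\tau_{b^{-1}})^{*}\|_{\text{HS}}^{2}$, so Tonelli applies unconditionally and yields $\|W_{\psi}f\|_{L^2(G)}^2=C_{\psi}\|f\|_2^2$; in particular $W_{\psi}f\in L^2(G)$ and $W_{\psi}$ is bounded on $L^1\cap L^2(A)$, which both justifies the Fubini step above and lets $W_{\psi}$ extend to a bounded operator on $L^2(A)$ on which the identity persists (a pointwise-a.e.\ subsequence argument identifies the $L^2(G)$-limit of $W_{\psi}f_n$ with $W_{\psi}f$, since $W_{\psi}f(b,a)=\langle f,\pi(b,a)\psi\rangle$ is continuous in $f$).

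I expect the only genuine obstacle to be bookkeeping: keeping the convention $\gamma(h_1*h_2)=\gamma(h_1)\gamma(h_2)$, the relation $\gamma(h^{\star})=\gamma(h)^{*}$, the exact power $\delta_{\tau}(b)^2$, and the Hilbert–Schmidt/trace identities mutually consistent, and making the density and Fubini steps fully rigorous. Conceptually nothing is deep here — the feasibility condition is designed precisely so that the $B$-integration collapses to the scalar $C_{\psi}$.
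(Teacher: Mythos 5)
Your proposal is correct and follows exactly the route the paper indicates (the paper gives no written proof, only the remark that the result follows from the feasibility condition together with Folland's Plancherel theorem for unimodular type I groups): express $W_{\psi}f(b,\cdot)$ as a convolution, pass to the operator-valued Fourier transform, cycle the trace to match the feasibility integrand, and collapse the $B$-integral to $C_{\psi}$. Your version is in fact more careful than the paper's, since you also supply the density extension from $L^1\cap L^2(A)$ and the Tonelli/Cauchy--Schwarz justification of the interchange of integrals.
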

\noindent In fact, one can show that $W_{\psi}(L^2(A))$ is a reproducing kernel Hilbert space with point-wise bounded kernel $k$ given by 
$$k((b,a),(b',a'))= \frac{1}{C_{\psi}}\langle \psi_{b'a'}, \psi_{ba}\rangle. $$ 
 Proceeding as in Lemma \ref{conn} and Theorem \ref{concth}, we obtain the following: 
\begin{thm}
Let  $G = B\ltimes_{\tau} A$ be an abstract wavelet group with $A_0$ non compact and $\psi \in L^2(A)$ be a feasible wavelet. If $0\neq f \in L^2(A)$, then  the set $\{ (b,a):W_{\psi}f(b,a) \neq 0\} $ has infinite measure.
\end{thm}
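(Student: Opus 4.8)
The plan is to transplant the proofs of Lemma~\ref{conn} and Theorem~\ref{concth}, which were carried out on $G/H$, to the group $G = B\ltimes_\tau A$ itself, equipped with its left Haar measure $\mu$. The role of the $G$-invariant measure on $G/H$ is played here by $\mu$, which is invariant under \emph{left} translations of $G$, and that is the only invariance the argument needs. The role of the non-compact identity component is played by $\{e_B\}\times A_0$: since the identity component $A_0$ is non-compact and (being an identity component) closed in $A$, the set $\{e_B\}\times A_0$ is a connected, non-compact, closed subgroup of $G$. Finally, Lemma~\ref{isom} gives that $C_\psi^{-1/2}W_\psi$ is an isometry, so $W_\psi f$ is a nonzero continuous function on $G$ whenever $f\neq 0$, and $W_\psi(L^2(A))$ is, as recorded above, a reproducing kernel Hilbert space with pointwise bounded kernel.

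First I would establish the analogue of Lemma~\ref{conn} for $G$: if $M_0\subseteq M\subseteq G$ are measurable with $0<\mu(M_0)\le\mu(M)<\infty$, then for every $\epsilon>0$ there exists $a\in A_0$ with $\mu(M)<\mu\big(M\cup(e_B,a)M_0\big)<\mu(M)+\epsilon$. The proof copies the original step for step: the map $c\mapsto\mu(M\cup cM_0)=\mu(M)+\mu(M_0)-\mu(M\cap cM_0)$ is continuous on $G$ (the analogue of Lemma~\ref{cont}, which goes through verbatim since $C_{00}(G)$ is dense in $L^1(G)$ and left translation is continuous on $(C_{00}(G),\|\cdot\|_\infty)$); by inner regularity choose a compact $K\subseteq M$ with $\mu(M\setminus K)<\mu(M_0)/2$; since $(e_B,a)K\cap K\neq\emptyset$ exactly when $(e_B,a)\in KK^{-1}$, and $KK^{-1}\cap(\{e_B\}\times A_0)$ is a compact subset of the non-compact set $\{e_B\}\times A_0$, one may pick $a\in A_0$ with $(e_B,a)K\cap K=\emptyset$; for this $a$ the same computation as in Lemma~\ref{conn} yields $\mu\big((e_B,a)M_0\cap K\big)<\mu(M_0)/2$ and then $\mu\big(M\cup(e_B,a)M_0\big)>\mu(M)$, while equality holds at $a=e_B$; a non-constant continuous function on the connected set $\{e_B\}\times A_0$ takes every intermediate value, which gives the claim.

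Next I would run the argument of Theorem~\ref{concth}. Suppose, for contradiction, that $0\neq f\in L^2(A)$ has $\mu(\{(b,a):W_\psi f(b,a)\neq 0\})<\infty$; set $F_0=W_\psi f$ and $M=M_0=\{W_\psi f\neq 0\}$, so $0<\mu(M_0)\le\mu(M)<\infty$. Iterating the lemma produces $a^{(k)}\in A_0$ and an increasing chain $M_1=M\subseteq M_2\subseteq\cdots$ with $M_{k+1}=M_k\cup(e_B,a^{(k)})\cdots(e_B,a^{(1)})M_0$ and $\mu(M_{k+1})<\mu(M_k)+\frac{\epsilon}{2^k}$, so $S:=\bigcup_k M_k$ satisfies $\mu(S)\le\mu(M)+\epsilon<\infty$. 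Define $F_1=F_0$ and $F_{k+1}(g)=F_k\big((e_B,a^{(k)})^{-1}g\big)$. Each $F_k$ is supported in $M_k\subseteq S$, and each lies in $W_\psi(L^2(A))$: for $g_0\in G$, unitarity of $\pi$ and the representation law give $W_\psi f(g_0^{-1}g)=\langle f,\pi(g_0^{-1}g)\psi\rangle=\langle\pi(g_0)f,\pi(g)\psi\rangle=W_\psi(\pi(g_0)f)(g)$, so $F_k=W_\psi\big(\pi(g_0^{(k)})f\big)$ with $g_0^{(k)}=(e_B,a^{(k-1)})\cdots(e_B,a^{(1)})$. The strict inequality $\mu(M_{k+1})>\mu(M_k)$ forces the $\{F_k\}$ to be linearly independent, since $\mathrm{supp}\,F_{k+1}\setminus M_k$ has positive measure, $F_1,\dots,F_k$ vanish there, and $F_{k+1}$ (a left translate of the a.e.-nonzero $F_0$) does not. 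Thus $W_\psi(L^2(A))$ would contain an infinite-dimensional subspace of functions vanishing off the finite-measure set $S$, contradicting the property that in a reproducing kernel Hilbert space with pointwise bounded kernel every such subspace is finite-dimensional \cite{Wilc:00}. Hence $W_\psi f=0$, so $f=0$, and therefore the support of $W_\psi f$ has infinite measure for every $f\neq 0$.

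As for the difficulty, no new idea is involved; the points needing care are all in the $G$-version of Lemma~\ref{conn} — namely that only left-invariance of $\mu$ (and left translations) may be used because $G$ need not be unimodular, that $\{e_B\}\times A_0$ is the connected non-compact set doing the work, and that left translates of $W_\psi f$ stay inside $W_\psi(L^2(A))$ via the unitarity of $\pi$. Everything else is a transcription of proofs already given.
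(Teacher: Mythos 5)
Your proof is correct and follows exactly the route the paper intends: the paper offers no argument for this theorem beyond the remark ``Proceeding as in Lemma~\ref{conn} and Theorem~\ref{concth}, we obtain the following,'' and your write-up supplies precisely that adaptation, with the right substitutions (left Haar measure of $G$ for the invariant measure on $G/H$, the connected non-compact closed set $\{e_B\}\times A_0$ for $G_0$, and the reproducing-kernel property of $W_\psi(L^2(A))$ recorded before the theorem). The details you single out --- that only left-invariance of $\mu$ is used since $G=B\ltimes_\tau A$ need not be unimodular, that left translates of $W_\psi f$ stay in the range via unitarity of $\pi$, and the explicit linear-independence argument from $\mu(M_{k+1})>\mu(M_k)$ --- are exactly the points the paper leaves implicit, so nothing is missing.
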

\begin{thm}
Let $G = B\ltimes_{\tau} A$ be an abstract wavelet  group and $\psi \in L^2(A)$ be a feasible wavelet. Let $M\subset G$ be such that $\mu(M)\leq \frac{\sqrt{C_{\psi}}}{\|\psi\|}.$
Then for any $f \in L^2(A)$,
\begin{align*}
\| W_{\psi}f - \chi_{M}W_{\psi}f\|\geq \sqrt{C_{\psi}}\left(1 - \frac{\|\psi\|\ {\mu(M)}^{1/2}}{\sqrt{C_{\psi}}}\right)\|f\|.
\end{align*}
\end{thm}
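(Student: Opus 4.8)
The plan is to combine the isometry-up-to-constant property of $W_\psi$ from Lemma~\ref{isom} with a uniform pointwise bound on $W_\psi f$, via the triangle inequality. First, applying Lemma~\ref{isom} with $g=f$ gives $\|W_\psi f\|^2=\langle W_\psi f,W_\psi f\rangle=C_\psi\langle f,f\rangle=C_\psi\|f\|^2$, so $\|W_\psi f\|=\sqrt{C_\psi}\,\|f\|$. Hence, by the triangle inequality in $L^2(G)$,
\[
\|W_\psi f-\chi_M W_\psi f\|\;\geq\;\|W_\psi f\|-\|\chi_M W_\psi f\|\;=\;\sqrt{C_\psi}\,\|f\|-\|\chi_M W_\psi f\|,
\]
and it remains only to estimate $\|\chi_M W_\psi f\|$ from above.

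For this I would use the pointwise bound on the wavelet transform. Since $\pi$ is a unitary representation of $G$ on $L^2(A)$, each $\pi(b,a)$ is unitary, so by Cauchy--Schwarz
\[
|W_\psi f(b,a)|=\bigl|\langle f,\pi(b,a)\psi\rangle\bigr|\leq \|f\|\,\|\pi(b,a)\psi\|=\|f\|\,\|\psi\|
\]
for every $(b,a)\in G$ (equivalently one may use the explicit formula $W_\psi f(b,a)=\delta_\tau(b)^{1/2}f*(\psi\circ\tau_{b^{-1}})^{\star}(a)$ together with the change-of-variables identity $d(a)=\delta_\tau(b)\,d(\tau_b(a))$; this is the same fact underlying the pointwise boundedness of the reproducing kernel $k$). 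Therefore
\[
\|\chi_M W_\psi f\|^2=\int_M |W_\psi f(b,a)|^2\,d\mu(b,a)\leq \|f\|^2\,\|\psi\|^2\,\mu(M),
\]
so that $\|\chi_M W_\psi f\|\leq \|\psi\|\,\mu(M)^{1/2}\,\|f\|$.

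Substituting this into the first display yields
\[
\|W_\psi f-\chi_M W_\psi f\|\geq \sqrt{C_\psi}\,\|f\|-\|\psi\|\,\mu(M)^{1/2}\,\|f\|=\sqrt{C_\psi}\left(1-\frac{\|\psi\|\,\mu(M)^{1/2}}{\sqrt{C_\psi}}\right)\|f\|,
\]
which is the assertion; the hypothesis $\mu(M)\leq \sqrt{C_\psi}/\|\psi\|$ plays the role of a smallness condition on $M$ ensuring that the factor on the right is a non-vacuous lower bound. There is no real obstacle in this argument: the only step requiring care is the justification of the uniform estimate $|W_\psi f(b,a)|\leq \|f\|\,\|\psi\|$, i.e. the unitarity of the operators $\pi(b,a)$, which is built into the definition of $\pi$.
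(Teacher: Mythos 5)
Your proof is correct, and it reaches the stated bound by a more elementary route than the paper's. Both arguments start the same way, from the reverse triangle inequality together with $\|W_{\psi}f\|=\sqrt{C_{\psi}}\,\|f\|$ (Lemma~\ref{isom}); the difference is in how the term $\|\chi_{M}W_{\psi}f\|$ is controlled. You bound it directly: the unitarity of each $\pi(b,a)$ (which indeed follows from the defining relation $d(a)=\delta_{\tau}(b)\,d(\tau_{b}(a))$ and unimodularity of $A$) gives the uniform pointwise estimate $|W_{\psi}f(b,a)|\leq\|f\|\,\|\psi\|$, and integrating over $M$ yields $\|\chi_{M}W_{\psi}f\|\leq\|\psi\|\,\mu(M)^{1/2}\,\|f\|$. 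The paper instead writes $\chi_{M}W_{\psi}f=P_{M}P_{R}W_{\psi}f$, where $P_{R}$ is the projection onto the reproducing-kernel space $W_{\psi}(L^{2}(A))$ and $P_{M}$ is multiplication by $\chi_{M}$, and estimates the operator norm $\|P_{M}P_{R}\|$ by the Hilbert--Schmidt norm of its integral kernel $\chi_{M}(b,a)\,C_{\psi}^{-1}\langle\psi_{b'a'},\psi_{ba}\rangle$, arriving at $\|P_{M}P_{R}\|^{2}\leq C_{\psi}^{-1}\|\psi\|^{2}\mu(M)$ --- the same constant, obtained from the same two ingredients ($\|\psi_{ba}\|=\|\psi\|$ and Lemma~\ref{isom}). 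Your version avoids the projection and kernel machinery entirely and is, if anything, cleaner; what the paper's formulation buys is the explicit appearance of the concentration operator $P_{M}P_{R}$, the standard object in uncertainty-principle arguments of this kind. One small caveat on your closing remark: the hypothesis $\mu(M)\leq\sqrt{C_{\psi}}/\|\psi\|$ is not in fact the condition making the right-hand side nonnegative (that would be $\mu(M)\leq C_{\psi}/\|\psi\|^{2}$), and neither your proof nor the paper's actually uses it; the inequality as derived holds for every measurable $M$ of finite measure.
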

\begin{proof}
Let $P_R :L^2(G) \to L^2(G) $ be the orthogonal projection from $L^2(G)$ to the closed subspace $W_{\psi}(L^2(A))$ and $P_M :L^2(G) \to L^2(G)$ be the orthogonal projection from $L^2(G)$ to the closed subspace of functions with support contained in $M.$ Now, 
\begin{align}\label{conc}
\| W_{\psi}f - \chi_{M}\cdot W_{\psi}f\| &= \| W_{\psi}f- P_MP_R(W_{\psi}f)\| \nonumber \\ \nonumber
& \geq \|W_{\psi}f\| - \|P_MP_R\|\ \|W_{\psi}f\|\\
&= (1-\|P_MP_R\|)\ \sqrt{C_{\psi}}\ \|f\|.
\end{align}  
Now $P_R$ being a projection on a reproducing kernel Hilbert space can be represented by
\begin{align*}
P_RF(b,a) = \langle F(b^{'},a^{'}), k((b^{'},a^{'}), (b,a)) \rangle, 
\end{align*}
where $F \in L^2(G)$ and $k(.,.)$ is the reproducing kernel. Thus, for every $F\in L^2(G),$ we have
\begin{align*}
P_MP_RF(b,a) = 
\int_{G} \chi_{M}(b,a)k((b^{'},a^{'}), (b,a))F(b^{'},a^{'})d\mu(b^{'},a^{'}).
\end{align*} 
 From \cite{hal;78}, the operator norm $\|P_MP_R\|$  is given by
\begin{flalign*}
\| P_MP_R\|^{2} &= \int_{G}\int_{G}|\chi_{M}(b,a)\ k((b^{'},a^{'}),(b,a))|^2\ d\mu(b,a)\ d\mu(b^{'},a^{'})&\\
&=\int_{G}\int_{G}|\chi_{M}(b,a)\frac{1}{C_{\psi}}\langle \psi_{ba}, \psi_{b^{'}a^{'}}\rangle |^2\ d\mu(b,a)\ d\mu(b^{'},a^{'})&\\
&=\frac{1}{C_{\psi}^2}\int_{M} \int_{G}|W_{\psi}\psi_{ba}(b^{'},a^{'})|^2\ d\mu(b^{'},a^{'})\ d\mu(b,a)&\\
&=\frac{1}{C_{\psi}}\int_{M}\|\psi_{ba}\|^2\ d\mu(b,a)&\\
&= \frac{1}{C_{\psi}}\|\psi\|^{2}\ \mu(M).
\end{flalign*}
On substituting the value $\|P_MP_R\|$ into (\ref{conc}), we get the desired inequality.
\end{proof}
\noindent Next, we prove an analogue of Heisenberg type inequality for wavelet transform.
\begin{thm}
Let $G = B \ltimes_{\tau}(L\times \mathbb{R})$ be a wavelet group, where $L$ is a compact group and $\psi$ be a feasible wavelet. Then for any $f\in L^2(\mathbb{R})$, we have 
\begin{align*}
&\left(\int_{B}\int_{L}\int_{\mathbb{R}}|t|^2|W_{\psi}f(b,l,t)|^2\delta_{\tau}(b) dt\ dl\ db \right)^{1/2}
 \left( \int_{\widehat{L}}\int_{\widehat{R}}|\gamma|^2\|(\delta,\gamma)f\|^2_{\text{HS}}\ d\delta\ d\gamma\right)^{1/2}\\
&\geq\frac{\sqrt{C_{\psi}}}{2}||f||^2.
\end{align*}
\end{thm}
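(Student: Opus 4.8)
The plan is to combine a classical Heisenberg inequality on $\mathbb{R}$ with the isometry property of the wavelet transform (Lemma~\ref{isom}) and Plancherel's theorem for $L\times\mathbb{R}$. The key observation is that the left-hand side of the claimed inequality is, up to the constant $C_\psi$, a product of a ``position-side'' dispersion of $W_\psi f$ on the group $G = B\ltimes_\tau(L\times\mathbb{R})$ and a ``frequency-side'' dispersion of $f$ on $\widehat{L\times\mathbb{R}} = \widehat L\times\widehat{\mathbb{R}}$. First I would use the explicit formula $W_\psi f(b,l,t) = \delta_\tau(b)^{1/2} f*(\psi\circ\tau_{b^{-1}})^\star(l,t)$ together with the feasibility condition to relate $\int_B\int_L\int_{\mathbb{R}} |t|^2|W_\psi f(b,l,t)|^2\delta_\tau(b)\,dt\,dl\,db$ to $\int_L\int_{\mathbb{R}} |t|^2 |f(l,t)|^2\,dt\,dl$ (possibly up to commuting the multiplication by $|t|^2$ past the automorphisms $\tau_b$; here I would need the action of $B$ on the $\mathbb{R}$-factor to be compatible, or else absorb the resulting weight — see the obstacle below). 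Assuming this, the first factor is comparable to $C_\psi^{1/2}$ times the standard position dispersion of $f$.

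Next I would identify the second factor: by the Plancherel theorem for the unimodular type~I group $L\times\mathbb{R}$ (so $\widehat{L\times\mathbb{R}}$ consists of pairs $(\delta,\gamma)$ with $\delta\in\widehat L$, $\gamma\in\widehat{\mathbb{R}}\simeq\mathbb{R}$), one has $\|f\|_2^2 = \int_{\widehat L}\int_{\widehat{\mathbb{R}}}\|(\delta,\gamma)f\|_{\mathrm{HS}}^2\,d\delta\,d\gamma$, and the operator $(\delta,\gamma)f$ is essentially the partial Fourier transform of $f$ in the $\mathbb{R}$-variable together with the Fourier expansion in $\widehat L$. Hence $\int_{\widehat L}\int_{\widehat{\mathbb{R}}}|\gamma|^2\|(\delta,\gamma)f\|_{\mathrm{HS}}^2\,d\delta\,d\gamma$ is exactly the squared frequency dispersion $\|\,|\gamma|\widehat f\,\|_2^2$ of $f$ in the $\mathbb{R}$-direction. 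Then the classical one-dimensional Heisenberg inequality $\|tf\|_2\,\||\gamma|\widehat f\|_2\ge\frac14\|f\|_2^2$, applied fiberwise over $l\in L$ (with $L$ compact, so integrating in $l$ preserves the inequality by Cauchy--Schwarz), yields
\begin{align*}
\left(\int_L\int_{\mathbb{R}}|t|^2|f(l,t)|^2\,dt\,dl\right)^{1/2}\left(\int_{\widehat L}\int_{\widehat{\mathbb{R}}}|\gamma|^2\|(\delta,\gamma)f\|_{\mathrm{HS}}^2\,d\delta\,d\gamma\right)^{1/2}\ge\frac14\|f\|_2^2.
\end{align*}
Combining this with the first step (which contributes the factor $\sqrt{C_\psi}$, since $\|W_\psi f\|_2^2 = C_\psi\|f\|_2^2$ forces the position dispersion of $W_\psi f$ to carry a $C_\psi$) and tracking the constant gives the factor $\sqrt{C_\psi}/2$ and $\|f\|^2$ on the right.

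The main obstacle I anticipate is the first step: controlling $\int_B\int_L\int_{\mathbb{R}}|t|^2|W_\psi f(b,l,t)|^2\delta_\tau(b)\,dt\,dl\,db$ in terms of $\|tf\|_2^2$. Because $W_\psi f(b,l,t)$ is a convolution $f*(\psi\circ\tau_{b^{-1}})^\star$ evaluated at $(l,t)$, the weight $|t|^2$ does not simply pass through; one must either use that $|t|^2 \le 2|t-s|^2 + 2|s|^2$ to split the convolution (paying a term involving $\||s|\psi\|$, which would force an additional hypothesis or a modified constant), or exploit a covariance identity $W_\psi(\pi(b_0,\cdot)f) $ shifting the $t$-variable, as in the proof of Theorem~\ref{concth}. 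The cleanest route, which I would pursue, is to observe that for the specific semidirect-product structure the map $(b,l,t)\mapsto(b,l,t)\cdot(\text{identity in }A)$ and the feasibility relation together give a Parseval-type identity $\int_B\int_{L\times\mathbb{R}}g(b,l,t)|W_\psi f(b,l,t)|^2\delta_\tau(b)\,d(l,t)\,db = C_\psi\langle g_0 f,f\rangle$ only for $g$ depending on the $A$-variable through $\tau_b$; since $|t|^2$ is not $B$-invariant this is where the argument is delicate, and handling it may require assuming $\tau_b$ acts on the $\mathbb{R}$-factor by scaling (as in the similitude-group examples) so that the Jacobian $\delta_\tau(b)$ exactly absorbs the change of the $|t|^2$ weight. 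I expect the authors resolve this by such a structural assumption on $\tau$, and the rest of the argument is the routine Heisenberg-plus-Plancherel bookkeeping sketched above.
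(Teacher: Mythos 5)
There is a genuine gap, and it stems from a misreading of what the first factor on the left-hand side is. The theorem's position-side factor is $\bigl(\int_B\int_L\int_{\mathbb{R}}|t|^2|W_\psi f(b,l,t)|^2\delta_\tau(b)\,dt\,dl\,db\bigr)^{1/2}$, i.e.\ the dispersion of the \emph{transform} $W_\psi f$ itself; only the frequency-side factor is expressed in terms of $f$. You spend most of your effort trying to convert the first factor into $\int_L\int_{\mathbb{R}}|t|^2|f(l,t)|^2\,dt\,dl$ so that you can apply the classical Heisenberg inequality directly to $f$, and you correctly observe that the $|t|^2$ weight does not commute with the automorphisms $\tau_b$ — but your proposed fix (assume $\tau_b$ acts on the $\mathbb{R}$-factor by a scaling whose Jacobian absorbs the weight) is an extra structural hypothesis the theorem does not make, and the identity you want is false in general (e.g.\ for similitude-type actions the weight picks up a factor $a^2$). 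No such conversion is needed, and the argument as you outline it does not close.

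The step you are missing is to apply the Heisenberg inequality on $L\times\mathbb{R}$ (the result of \cite{bans;15}) \emph{fiberwise in $b$}, to the slice $g_b:=W_\psi f(b,\cdot,\cdot)$, which gives for a.e.\ $b$
\begin{align*}
\Bigl(\int_L\int_{\mathbb{R}}|t|^2|g_b(l,t)|^2\,dl\,dt\Bigr)^{1/2}\Bigl(\int_{\widehat L}\int_{\mathbb{R}}|\gamma|^2\|(\delta,\gamma)g_b\|_{\mathrm{HS}}^2\,d\delta\,d\gamma\Bigr)^{1/2}\ \geq\ \tfrac12\|g_b\|_2^2 .
\end{align*}
Integrating against $\delta_\tau(b)\,db$, using Lemma~\ref{isom} on the right ($\int_B\|g_b\|_2^2\,\delta_\tau(b)\,db=C_\psi\|f\|^2$) and Cauchy--Schwarz in $b$ on the left splits the product into the two global factors. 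The feasibility condition is then applied \emph{only} to the frequency factor: since $g_b=\delta_\tau(b)^{1/2}f*(\psi\circ\tau_{b^{-1}})^\star$, one has $\|(\delta,\gamma)g_b\|_{\mathrm{HS}}^2=\delta_\tau(b)\,\mathrm{tr}\bigl((\delta,\gamma)f^*(\delta,\gamma)f(\delta,\gamma)(\psi\circ\tau_{b^{-1}})^*(\delta,\gamma)(\psi\circ\tau_{b^{-1}})\bigr)$, and the $B$-integral collapses to $C_\psi\int_{\widehat L}\int_{\mathbb{R}}|\gamma|^2\|(\delta,\gamma)f\|_{\mathrm{HS}}^2\,d\delta\,d\gamma$ pointwise in $(\delta,\gamma)$, because the weight $|\gamma|^2$ sits outside the $b$-integral. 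Dividing by $\sqrt{C_\psi}$ yields the stated constant $\sqrt{C_\psi}/2$. Your identification of the frequency factor via Plancherel is correct and reusable; the position factor should simply be left alone.
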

\begin{proof}
For almost all $b\in B$, using Heisenberg inequality for Fourier transform on $L\times \mathbb{R}$ (see \cite{bans;15}), we have
\begin{align*}
&\Big(\int_{L}\int_{\mathbb{R}}|t|^2|W_{\psi}f(b,l,t)|^2dl\  dt\Big)^{1/2}\Big(\int_{\widehat{L}}\int_{\mathbb{R}}|\gamma|^2\|(\delta, \gamma)W_{\psi}f(b,\cdot,\cdot)\|_{\text{HS}}^2\ d\delta\ d\gamma \Big)^{1/2}\\
& \geq \frac{1}{2}\int_{L}\int_{\mathbb{R}}|W_{\psi}f(b,l,t)|^2dl\ dt
\end{align*}
On integrating both sides with respect to the measure $\delta_{\tau}(b)db$, we get
\begin{align*}
&\int_B\Big(\int_{L}\int_{\mathbb{R}}|t|^2|W_{\psi}f(b,l,t)|^2 dl\ dt\Big)^{1/2}\Big(\int_{\widehat{L}}\int_{\mathbb{R}}|\gamma|^2\|(\delta, \gamma)W_{\psi}f(b,\cdot,\cdot)\|_{\text{HS}}^2 \ d\delta\ d\gamma \Big)^{1/2} \delta_{\tau}(b)db \\
&\geq  \frac{1}{2}\int_{B}\int_{L}\int_{\mathbb{R}}|W_{\psi}f(b,l,t)|^2 \delta_{\tau}(b) dl\ dt\ db.
\end{align*}
Using Lemma \ref{isom} and  Cauchy-Schwarz inequality, it follows that
\begin{flalign}\label{cauc}
&\left(\int_B\int_{L}\int_{\mathbb{R}}|t|^2|W_{\psi}f(b,l,t)|^2 \delta_{\tau}(b) dl\ dt\ db\right)^{1/2}\nonumber &\\
& \times \left(\int_B\int_{\widehat{L}}\int_{\mathbb{R}}|\gamma|^2\|(\delta, \gamma)W_{\psi}f(b,\cdot,\cdot)\|_{\text{HS}}^2\ \delta_{\tau}(b) d\delta\ d\gamma\ db \right)^{1/2} \geq\frac{1}{2}C_{\psi}\|f\|^2.
 \end{flalign}
 Since, \begin{align*}
 \|(\delta,\gamma)W_{\psi}f(b,.,.)\|_{\text{HS}}^2 & = \text{tr}((\delta,\gamma)W_{\psi}f(b,.,.)(\delta,\gamma)W_{\psi}f(b,.,.)^*)\\
 &= \delta_{\tau}(b)\text{tr}((\delta,\gamma)f^*(\delta,\gamma)f(\delta,\gamma)(\psi \circ \tau_{b^{-1}})^*(\delta,\gamma)(\psi \circ \tau_{b^{-1}})),
 \end{align*}
 therefore, on using feasibility condition, we obtain
 \begin{align}\label{adm}
& \int_B\int_{\widehat{L}}\int_{\mathbb{R}}|\gamma|^2\|(\delta, \gamma)W_{\psi}f(b,\cdot,\cdot)\|_{\text{HS}}^2\  \delta_{\tau}(b) d\delta\ d\gamma\ db\nonumber\\
& = \int_{\widehat{L}}\int_{\mathbb{R}}|\gamma|^2\int_B \text{tr}((\delta,\gamma)f^*(\delta,\gamma)f(\delta,\gamma)(\psi \circ \tau_{b^{-1}})^*(\delta,\gamma)(\psi \circ \tau_{b^{-1}}))\delta_{\tau}(b)^2 db\ d\delta\ d\gamma\nonumber \\
 &=\int_{\widehat{L}}\int_{\mathbb{R}}|\gamma|^2C_{\psi}\text{tr}((\delta,\gamma)f^*(\delta,\gamma)f(\delta,\gamma))d\delta\ d\gamma \nonumber \\
 &= C_{\psi}\int_{\widehat{L}}\int_{\mathbb{R}}|\gamma|^2\|(\delta,\gamma)f\|_{\text{HS}}^2\ d\delta\ d\gamma.
 \end{align}
 On substituting (\ref{adm}) into (\ref{cauc}), we get 
 \begin{align*}
 &\left(\int_B\int_{L}\int_{\mathbb{R}}|t|^2|W_{\psi}f(b,l,t)|^2 \delta_{\tau}(b) dl\ dt\ db\right)^{1/2}
\left( \int_{\widehat{L}}\int_{\mathbb{R}}|\gamma|^2\|(\delta,\gamma)f\|_{\text{HS}}^2\ d\delta\ d\gamma\right)^{1/2} &\\
& \geq\frac{1}{2}\sqrt{C_{\psi}}\ \|f\|^2. &\qedhere 
\end{align*}
\end{proof}
\begin{rem}
\begin{enumerate}
\item[(i)] Using the data given in \cite{bans;15}, one may write the explicit form of the Heisenberg type inequality when  $A$ is a locally compact abelian group with non-compact identity component  or $n$-dimensional nilpotent Lie group or of the form $K\ltimes \mathbb{R},$ where $K$ is compact subgroup of  group of automorphism of $\mathbb{R}$.

\item[(ii)] As a particular case of above results, one can obtain results for abstract shearlet transform (see \cite{kam:15}) by taking $B= H\ltimes_{\lambda}K$, where $H$ and K are locally compact groups and $\lambda:H\to $ Aut$(K)$ is a homomorphism.
\end{enumerate}
\end{rem}
\section*{Acknowledgements}
\noindent 
The first author is supported by  UGC under joint UGC-CSIR  Junior Research Fellowship (Ref. No:21/12/2014(ii)EU-V).

\bibliographystyle{amsplain}

\end{document}